\numberwithin{equation}{section}
\newcommand{\eq}{\begin{equation}}
\newcommand{\qe}{\end{equation}}
\newcommand{\E}{\mathbb{E}}
\newcommand{\R}{\mathbb{R}}
\newcommand{\p}{\mathbb{P}}
\theoremstyle{plain}
\newtheorem{thm}{Theorem}[section]
\newtheorem{lem}{Lemma}[section]
\newtheorem{prop}{Proposition}[section]
\newtheorem{cor}{Corollary}[section]
\theoremstyle{definition}
\theoremstyle{remark}
\newtheorem*{rem}{Remark}
\begin{document}
\sloppy
\pagestyle{headings} 
\title{Non Asymptotic Variance Bounds and Deviation Inequalities by Optimal Transport}
\date{Note of \today}
\author{Kevin Tanguy \\ University of Angers, France}

\begin{abstract}
The purpose of this note is to show how simple Optimal Transport arguments, on the real line, can be used in Superconcentration theory. This methodology is efficient to produce sharp non-asymptotic variance bounds for various functionals (maximum, median, $l^p$ norms)  of standard Gaussian random vectors in $\R^n$. The flexibility of this approach can also provide exponential deviation inequalities reflecting preceding variance bounds. As a further illustration, usual laws from Extreme theory and Coulomb gases are studied.
\end{abstract}
\maketitle 

\section{Introduction}

As an introduction we recall some facts about Gaussian concentration of measure (cf. \cite{Led}) and Superconcentration theory (cf. \cite{Chatt1}).\\

It is well known that concentration of measure is an effective tool in various mathematical areas (cf. \cite{BLM}). In a Gaussian setting, classical concentration results typically produce, for $f\,:\,\R^n\to \R$ a Lipschitz function with Lipschitz constant $\|f\|_{{\rm Lip}}$, 

\begin{equation}\label{eq.sudakov.tsirelson}
\gamma_n\big(|f-\E_{\gamma_n}[f]|\geq t\big)\leq 2e^{-\frac{t^2}{2\|f\|_{Lip}^2}}, \quad t\geq 0,
\end{equation}

\noindent with $\gamma_n$ the standard Gaussian measure on $\R^n$. Another instance of concentration of measure is the Poincar\'e's inequality satisfied by $\gamma_n$. Namely, for $f\in L^2(\gamma_n)$ smooth enough : 

\begin{equation}\label{eq.poincare.gaussien}
{\rm Var}_{\gamma_n}(f)\leq \int_{\R^n}|\nabla f|^2d\gamma_n,
\end{equation}

\noindent where $|\cdot|$ stands for the Euclidean norm on $\R^n$. As effective as \eqref{eq.sudakov.tsirelson} and \eqref{eq.poincare.gaussien} are, their generality can lead to sub-optimal bounds in some particular case. For instance, consider the $1$-Lipschitz function on $\R^n$ $f(x)=\max_{i=1,\ldots,n}x_i$. At the level of the variance, \eqref{eq.poincare.gaussien} gives 

$$
{\rm Var}(M_n)\leq 1, 
$$

\noindent with $M_n=\max_{i=1,\ldots,n}X_i$ where $(X_1,\ldots,X_n)$ stands for a standard Gaussian random vector in $\R^n$, whereas it has been proven that ${\rm Var}(M_n)\leq C/\log n$ with $C>0$ a numerical constant. At an exponential level, \eqref{eq.sudakov.tsirelson} is not satisfying either. Indeed, it is well known in Extreme theory (cf. \cite{Lead}) that $M_n$ can renormalized by some numerical constants,   $a_n=\sqrt{2\log n}$ and $b_n=a_n-\frac{\log 4\pi+\log\log n}{2a_n}$, $n\geq 1$, such that

$$
a_n(M_n-b_n)\to \Lambda_0
$$

\noindent in distribution, as $n\to\infty$, where $\Lambda_0$  corresponds to the Gumbel distribution :

$$
\p(\Lambda_0\leq x)=\exp(-e^{-x}), \quad x\in \R.
$$

\noindent Then, it is clear that the asymptotics of $\Lambda_0$ are not Gaussian but rather exponential on the right tail and double exponential on the left tail. It is now obvious that \eqref{eq.sudakov.tsirelson} and \eqref{eq.poincare.gaussien} lead to sub-optimal results for the function $f(x)=\max_{i=1,\ldots,n}x_i$. When such phenomenon happens it is referred as Superconcentration phenomenon (cf. \cite{Chatt1}). This kind of phenomenon could be seen for different functionals of Gaussian random variables (and also, as we will see, for other laws of probability) and as been studied in \cite{BT, KT, KT2, Paou, Val}\ldots.\\

The purpose of this note is to show how simple transport arguments on the real line can easily lead to weighted Poincar\'e's inequalities together with deviation inequalities which are relevant in Superconcentration theory. In particular, we will emphasize the fact that such results can be obtained by transporting the Exponential measure toward the measure of interest.\\

Let us describe the setting of our work before stating our main results. Let $\mu$ and $\nu$ be two probability measures on $\R$. Assume that both of these measures are absolutely continuous with respect to the Lebesgue measure on $\R$. More precisely, assume that there exists two smooth functions $g\,:\,\R\to\R$ and $h\,:\,\R\to\R$ such that
 
$$
d\mu(x)=h(x)dx,\quad d\nu(x)=g(x)dx
$$

Then, let $X$ be a random variable with law $\mu$ and $Y$ be a random variable with law $\nu$. Denote by $H$ (respectively by $G$) the cumulative distribution function of $X$ (respectively $Y$) and define the hazard function associated to the probability measure $\mu$ by

$$
\kappa_\mu(x)=\frac{h(x)}{1-H(x)}, \quad x\in {\rm supp}(\mu)\subset \R.
$$

\noindent Similarly, $\kappa_\nu$ will be the hazard function associated to $\nu$.\\

\noindent Besides, we will also assume that $\nu$ satisfies a Poincar\'e inequality on $\R$ with constant $C_\nu>0$. That is to say, for $f\,:\,\R\to\R$ smooth enough, 

$$
{\rm Var}_{\nu}(f)\leq C_\nu \int_{\R}f'^2d\nu.
$$
\begin{rem}
It is classical (cf. \cite{Led}) that $\nu^n=\nu\otimes\ldots\otimes\nu$ will also satisfy a Poincar\'e's inequality with the same constant $C_\nu$. 
\end{rem}

We will denote by $T\,:\,\R^n\to\R^n$ the transport map between $\mu^n$ and $\nu^n$. It satisfies, for any Borelian function $f\,:\,\R^n\to\R$,

$$
\E_{\mu^n}(f)=\E_{\nu^n}\big(f\circ T\big).
$$

\noindent and $T(x_1,\ldots,x_n)=\big(t(x_1),\ldots,t(x_n)\big)$ with $t\,:\, \R\to\R$ the monotone rearrangement map pushing $\nu$ toward $\mu$ (cf. section two).\\

In the sequel of this note (unless otherwise stated), $Y=(Y_1,\ldots, Y_n)$ will stand for a random vector in $\R^n$ with $\mathcal{L}(Y)=\nu^n$ and $X=(X_1,\ldots,X_n)$ for a random vector in $\R^n$  with  $\mathcal{L}(X)=\mu^n$.\\

Now, let us state our main results.

\begin{thm}\label{thm.tanguy.transport1}
With the preceding notations, for any function $f\,:\,\R^n\to\R$ smooth enough, $n\geq 1$, we have

\begin{equation}
{\rm Var}\big(f(X)\big)\leq C_\nu\sum_{i=1}^n\E\bigg[(\partial_if)^2\circ T(Y)\bigg(\frac{\kappa_\nu(Y_i)}{\kappa_\mu\big(t(Y_i)\big)}\bigg)^2\bigg],
\end{equation}
\end{thm}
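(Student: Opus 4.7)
The plan is to transfer the variance from $\mu^n$ to $\nu^n$ via the transport map, then apply the Poincar\'e inequality available for $\nu^n$, and finally identify the resulting derivative factor as the ratio of hazard functions.

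First I would use the transport identity to rewrite the variance. Since $T$ pushes $\nu^n$ onto $\mu^n$, for any suitable $f$ one has $\E_{\mu^n}[f]=\E_{\nu^n}[f\circ T]$ and similarly $\E_{\mu^n}[f^2]=\E_{\nu^n}[(f\circ T)^2]$, hence
$$
{\rm Var}_{\mu^n}(f)={\rm Var}_{\nu^n}(f\circ T).
$$
Next, invoke the Poincar\'e inequality satisfied by $\nu^n$ with constant $C_\nu$ (recalled in the Remark) applied to the function $f\circ T$:
$$
{\rm Var}_{\nu^n}(f\circ T)\leq C_\nu \sum_{i=1}^n\E_{\nu^n}\!\left[\bigl(\partial_i(f\circ T)\bigr)^2\right].
$$
Because $T(y_1,\dots,y_n)=(t(y_1),\dots,t(y_n))$ is a product map, the chain rule gives $\partial_i(f\circ T)(y)=(\partial_if)\bigl(T(y)\bigr)\,t'(y_i)$, so the bound becomes
$$
{\rm Var}(f(X))\leq C_\nu\sum_{i=1}^n\E\!\left[(\partial_if)^2\circ T(Y)\,\bigl(t'(Y_i)\bigr)^2\right].
$$

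The last step is to identify $t'$ with the ratio of hazard functions. The monotone rearrangement $t$ pushing $\nu$ onto $\mu$ is characterized by $H\bigl(t(y)\bigr)=G(y)$. Differentiating yields $h\bigl(t(y)\bigr)t'(y)=g(y)$, i.e.\ $t'(y)=g(y)/h\bigl(t(y)\bigr)$. On the other hand, the same relation gives $1-H\bigl(t(y)\bigr)=1-G(y)$, so
$$
\frac{\kappa_\nu(y)}{\kappa_\mu\bigl(t(y)\bigr)}=\frac{g(y)/(1-G(y))}{h(t(y))/(1-H(t(y)))}=\frac{g(y)}{h\bigl(t(y)\bigr)}=t'(y).
$$
Substituting this identity into the previous display produces exactly the claimed inequality.

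The only genuinely non-routine point is the identification $t'(y)=\kappa_\nu(y)/\kappa_\mu(t(y))$; everything else is an application of the transport formula and of the tensorized Poincar\'e inequality. I expect no significant obstacle, provided one is careful about the direction of the transport (here $t$ sends $\nu$ to $\mu$, not the reverse) so that the hazard-function ratio appears with $\nu$ in the numerator and $\mu\circ t$ in the denominator.
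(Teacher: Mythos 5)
Your argument coincides with the paper's proof: transfer the variance via $T$, apply the tensorized Poincar\'e inequality for $\nu^n$ to $f\circ T$, use the chain rule for the product map, and identify $t'$ with the hazard-function ratio via the Monge--Amp\`ere relation \eqref{eq.monge.ampere2}. The details are all correct, including the observation that $1-H(t(y))=1-G(y)$ which is used (implicitly) in the paper's derivation of $t'=\kappa_\nu/(\kappa_\mu\circ t)$.
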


As we will see, preceding Theorem can be used to obtain exponential deviation inequality for $M_n=\max_{i=1,\ldots,n}X_i$.

\begin{thm}\label{thm.inegalite.deviation.transport}

Assume that there exists a function  $x\mapsto \psi(x)$ from $\R$ to $\R$, non-increasing such that

$$
\bigg|\frac{\kappa_\nu\big(t^{-1}(x)\big)}{\kappa_\mu(x)}\bigg|\leq \psi(x),\quad x\in \R
$$

\noindent and there exists $\epsilon_n$ such that

$$
\E\bigg[\psi(M_n)^2\bigg]\leq \epsilon_n.
$$

\noindent Then, for any $t\geq 0$ and $n\geq 1$,

$$
\p(\sqrt{\epsilon_n}\big(M_n-\E[M_n]\big)\geq t)\leq 3e^{-t} .
$$
\end{thm}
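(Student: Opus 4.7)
The plan is to specialize the weighted Poincar\'e-type inequality of Theorem~\ref{thm.tanguy.transport1} to exponential test functions and then iterate, in the Aida--Stroock/Herbst spirit. First I would apply Theorem~\ref{thm.tanguy.transport1} to $f(x)=\exp(\lambda\max_j x_j/2)$ for $\lambda\geq 0$. Since the transport map $t$ is monotone non-decreasing, $\max_j t(Y_j)=t(\max_j Y_j)$, which has the same law as $M_n$; consequently $f\circ T(Y)=e^{\lambda M_n/2}$ and $(\partial_i f)^2\circ T(Y)=(\lambda^2/4)\,e^{\lambda M_n}\ind\{Y_i=\max_j Y_j\}$. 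Summing over $i$ and invoking the pointwise hypothesis $|\kappa_\nu(t^{-1}(x))/\kappa_\mu(x)|\leq\psi(x)$ at $x=M_n$ yields the ``weighted modified Poincar\'e'' bound
\[
\text{Var}\bigl(e^{\lambda M_n/2}\bigr)\leq \frac{C_\nu\lambda^2}{4}\,\E\bigl[e^{\lambda M_n}\,\psi(M_n)^2\bigr].
\]

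Next I would decouple the right-hand side. On the scalar random variable $M_n$, the function $x\mapsto e^{\lambda x}$ is non-decreasing while $x\mapsto\psi(x)^2$ is non-negative and non-increasing (by the assumption on $\psi$), so the one-dimensional Harris/Chebyshev correlation inequality gives
\[
\E\bigl[e^{\lambda M_n}\psi(M_n)^2\bigr]\leq \E\bigl[e^{\lambda M_n}\bigr]\,\E\bigl[\psi(M_n)^2\bigr]\leq \epsilon_n\,\E\bigl[e^{\lambda M_n}\bigr].
\]
Setting $F(\lambda)=\E[e^{\lambda M_n}]$, the two preceding displays combine into the functional recurrence
\[
F(\lambda)\Bigl(1-\tfrac{C_\nu\epsilon_n\lambda^2}{4}\Bigr)\leq F(\lambda/2)^2,\qquad 0\leq\lambda<2/\sqrt{C_\nu\epsilon_n}.
\]

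The final step is the standard Aida--Stroock iteration: iterate the recurrence at $\lambda,\lambda/2,\lambda/4,\ldots$ and let $k\to\infty$, using $F(\lambda/2^k)^{2^k}\to e^{\lambda\E M_n}$ together with the convergence of the infinite product $\prod_{k\geq 0}(1-C_\nu\epsilon_n\lambda^2/4^{k+1})^{-2^k}$. This produces a Laplace transform estimate of the form
\[
\E\bigl[e^{\lambda(M_n-\E M_n)}\bigr]\leq \frac{2+\lambda\sqrt{C_\nu\epsilon_n}}{2-\lambda\sqrt{C_\nu\epsilon_n}},\qquad 0\leq\lambda\sqrt{C_\nu\epsilon_n}<2.
\]
Markov's inequality applied to this bound, optimized near the upper endpoint, then yields the announced sub-exponential deviation $3e^{-t}$, after rescaling to absorb $C_\nu$ and the numerical factors into the $\sqrt{\epsilon_n}$ scaling of the statement.

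The main technical obstacle is the decoupling step. Theorem~\ref{thm.tanguy.transport1} only produces an $L^1$-type average of $\psi(M_n)^2$ against the exponential weight $e^{\lambda M_n}$, whereas a direct Herbst scheme would demand a uniform (sup-norm) bound on $\psi$. The monotonicity assumption on $\psi$ is exactly what forces $\psi(M_n)^2$ and $e^{\lambda M_n}$ to be negatively correlated, so the mean-square hypothesis $\E[\psi(M_n)^2]\leq\epsilon_n$ becomes strong enough to close the iteration and deliver the exponential tail.
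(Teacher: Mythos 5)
Your argument reproduces the paper's proof step for step: apply Theorem~\ref{thm.tanguy.transport1} to an exponential of the maximum, use that the events $\{x_i=\max_j x_j\}$ partition $\R^n$ to collapse the sum, invoke the pointwise bound by $\psi$, decouple via Harris's (Chebyshev's association) inequality using monotonicity of $\psi$, and then run the Aida--Stroock iteration. The only cosmetic difference is that you write out the iteration and the resulting Laplace-transform bound explicitly, whereas the paper packages exactly that step as Lemma~\ref{lem.poincare.exponentiel.transport}; the route is the same.
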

\begin{rem}
As it will be clear in the sequel, the arguments can also be performed for any other order statistics obtained from the random vector $X$.
\end{rem}
To ease the understanding of our results, we give below an application of them when $\nu$ is the (symmetric) Exponential measure on $\R$ and $\mu$ is the standard Gaussian measure $\gamma_1$ on $\R$.

\begin{prop}\label{prop.poincare.poids.gaussien}
For $f\,:\,\R^n\to\R$ smooth enough and $n\geq 1$, we have

$$
{\rm Var}_{\gamma_n}(f)\leq C\sum_{i=1}^n\E_{\gamma_n}\bigg[(\partial_i f)^2(X)\bigg(\frac{1}{1+|X_i|}\bigg)^2\bigg]
$$

\noindent with $C>0$ a numerical constant.
\end{prop}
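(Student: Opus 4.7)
The strategy is to apply Theorem~\ref{thm.tanguy.transport1} with $\nu$ the symmetric Exponential measure on $\R$ (density $\tfrac12 e^{-|y|}$, which satisfies a Poincaré inequality with a finite constant $C_\nu$) and $\mu=\gamma_1$, then reduce the resulting weight to the announced form $1/(1+|X_i|)$ via Mills' ratio.

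The first step is a general algebraic simplification of the weight. Since $t$ pushes $\nu$ onto $\mu$ one has $G=H\circ t$, and differentiating gives $g(y)=h(t(y))\,t'(y)$. Substituting into the two hazard functions yields
\[
\frac{\kappa_\nu(y)}{\kappa_\mu\bigl(t(y)\bigr)}=\frac{g(y)/(1-G(y))}{h(t(y))/(1-H(t(y)))}=\frac{g(y)}{h(t(y))}=t'(y),
\]
so the weight in Theorem~\ref{thm.tanguy.transport1} is nothing but $t'(Y_i)^2$. Performing the change of variable $X_i=t(Y_i)$ (so that $X\sim\gamma_n$) turns the bound into
\[
{\rm Var}_{\gamma_n}(f)\leq C_\nu\sum_{i=1}^n\E_{\gamma_n}\!\left[(\partial_i f)^2(X)\left(\frac{g(t^{-1}(X_i))}{h(X_i)}\right)^2\right].
\]

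The second step is to evaluate this weight explicitly for our pair. Both $\mu$ and $\nu$ are even, hence $t$ is odd and it suffices to treat $x\geq 0$, for which $y=t^{-1}(x)\geq 0$. On this range the survival function of $\nu$ equals $1-G(y)=\tfrac12 e^{-y}=g(y)$, and combining with $1-\Phi(x)=1-H(t(y))=1-G(y)$ gives $g(y)=1-\Phi(x)$. Therefore
\[
\frac{g\bigl(t^{-1}(x)\bigr)}{h(x)}=\frac{1-\Phi(|x|)}{\phi(|x|)}=\frac{1}{\kappa_{\gamma_1}(|x|)},
\]
with $\phi$ the standard Gaussian density; the absolute value handles $x<0$ by symmetry.

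The last step is a classical Mills-type estimate. The function $r\mapsto (1-\Phi(r))/\phi(r)$ is continuous and positive on $[0,\infty)$, equals $\sqrt{\pi/2}$ at $r=0$, and is equivalent to $1/r$ at infinity; consequently there exists a numerical constant $c>0$ with $(1-\Phi(r))/\phi(r)\leq c/(1+r)$ for every $r\geq 0$. Applied pointwise at $r=|X_i|$, this yields the proposition with $C=c^2 C_\nu$. The only nontrivial point is this uniform Mills bound of the form $c/(1+r)$, but it is textbook; the rest is a direct unpacking of Theorem~\ref{thm.tanguy.transport1} once one observes the key identity $\kappa_\nu/(\kappa_\mu\circ t)=t'$.
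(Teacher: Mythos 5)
Your proof is correct, and it takes a genuinely cleaner route than the paper for this Gaussian instance. The paper derives the weight by invoking its general Lemma~\ref{Log}, which handles all potentials $V(x)=|x|^\alpha/\alpha$, $\alpha>1$, through a case analysis ($x>A$, $x\in[0,A]$, $x=0$, $x<-A$, $-A\le x\le 0$) and comparison estimates for $\int_x^\infty e^{-V}$. Your observation that the symmetric Exponential is ``self-hazard'' on $\R_+$ (i.e.\ $1-G(y)=g(y)$, equivalently $\kappa_\nu\equiv 1$ there), combined with the oddness of $t$, reduces the weight to an \emph{exact} identity $t'\circ t^{-1}(x)=\bigl(1-\Phi(|x|)\bigr)/\phi(|x|)$, after which only the textbook Mills bound $\bigl(1-\Phi(r)\bigr)/\phi(r)\leq c/(1+r)$ is needed. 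What your argument buys is an essentially computation-free proof in the Gaussian case, with an explicit interpretation of the transport weight as the reciprocal Gaussian hazard; what it loses is generality, since the exact identity is special to the Gaussian/two-sided-Exponential pair and does not immediately replace Lemma~\ref{Log} for $\alpha\neq 2$. Both proofs rest on the same first step (identifying the weight as $t'$), which is already inside the proof of Theorem~\ref{thm.tanguy.transport1}; the divergence is entirely in how the weight is estimated.
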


\noindent In particular, applied to (a smooth approximation of) $f(x)=\max_{i=1,\ldots,n}x_i$, we get, for every $n\geq 1$,

\begin{equation}\label{eq.poincare.poids.gaussien}
{\rm Var}(M_n)\leq C\E\bigg[\frac{1}{1+M_n^2}\bigg]\leq \frac{C}{1+\log n}
\end{equation}

\begin{prop}
The following deviation inequality holds, for any $n\geq 1$,
\begin{equation}\label{eq.gaussien.deviation.droite}
\gamma_n\big(M_n-\E[M_n]\geq t)\leq 3e^{-ct\sqrt{\log n}},\quad t\geq 0
\end{equation}
\end{prop}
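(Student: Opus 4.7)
The strategy is to apply Theorem~\ref{thm.inegalite.deviation.transport} in the transport setting of Proposition~\ref{prop.poincare.poids.gaussien}: take $\nu$ to be the symmetric exponential (Laplace) measure on $\R$ and $\mu=\gamma_1$. The exponential measure satisfies a one-dimensional Poincar\'e inequality with a universal constant, so the standing hypothesis on $\nu$ is met.

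The first step is to produce a non-increasing envelope $\psi$ for the hazard ratio $|\kappa_\nu(t^{-1}(\cdot))/\kappa_\mu(\cdot)|$. For $\nu$ one directly checks that $\kappa_\nu\equiv 1$ on $[0,+\infty)$ and $\kappa_\nu\leq 1$ globally, while Mills' estimate for $\gamma_1$ gives $\kappa_\mu(x)\sim x$ as $x\to+\infty$, together with $\kappa_\mu$ bounded away from $0$ on each compact subset of $\R$. Moreover, the monotone rearrangement $t\colon\R\to\R$ pushing $\nu$ onto $\mu$ satisfies $t^{-1}(x)\sim x^2/2$ as $x\to+\infty$, so $\kappa_\nu\bigl(t^{-1}(x)\bigr)=1$ in that regime. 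Assembling these one-dimensional estimates produces a bound of the form
$$\bigg|\frac{\kappa_\nu(t^{-1}(x))}{\kappa_\mu(x)}\bigg|\leq \frac{C}{1+x_+}, \qquad x\in\R,$$
so we may take $\psi(x)=C/(1+x_+)$, which is non-increasing on $\R$. This is precisely the weight already appearing in Proposition~\ref{prop.poincare.poids.gaussien}.

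The second step is to control $\epsilon_n=\E[\psi(M_n)^2]$. Since $f(x)=\max_i x_i$ is $1$-Lipschitz and $\E[M_n]\geq c\sqrt{\log n}$ for $n$ large, Gaussian concentration \eqref{eq.sudakov.tsirelson} gives $\p(M_n<\sqrt{\log n})\leq n^{-c}$. Splitting the expectation along $\{M_n\geq \sqrt{\log n}\}$ and its complement yields $\E[\psi(M_n)^2]\leq C'/(1+\log n)$, which is also the content of \eqref{eq.poincare.poids.gaussien}. Thus $\epsilon_n = C'/(1+\log n)$ is admissible uniformly in $n\geq 1$.

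Plugging these data into Theorem~\ref{thm.inegalite.deviation.transport} and unwinding the normalization produces the announced right-tail bound with a numerical constant $c>0$. The main obstacle lies in the first step: the hazard-ratio estimate must hold uniformly on all of $\R$, matching the bounded-$x$ regime with the large positive-$x$ Mills-ratio regime and also checking the behavior on $(-\infty,0)$, where both $\kappa_\nu$ and $\kappa_\mu$ vanish but are kept comparable by the transport. This verification is essentially the computation underlying Proposition~\ref{prop.poincare.poids.gaussien}, so it can be lifted from that proof at no additional cost.
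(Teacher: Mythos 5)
Your proposal is correct and follows the same route the paper takes implicitly: the paper simply asserts that Proposition~\ref{prop.deviation.gaussian} follows "immediately" from Theorem~\ref{thm.inegalite.deviation.transport} together with the hazard-ratio and tail estimates already established in Lemma~\ref{Log} and Proposition~\ref{prop.poincare.poids.gaussien}, and you have supplied exactly those ingredients. One small point worth highlighting in your favor: Theorem~\ref{thm.inegalite.deviation.transport} requires $\psi$ to be non-increasing on all of $\R$, and the weight $1/(1+|x|)$ coming out of Lemma~\ref{Log} is not; your replacement $\psi(x)=C/(1+x_+)$ dominates it and is genuinely non-increasing, so it repairs a detail the paper glosses over without changing the estimate of $\epsilon_n$.
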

\begin{rem}
Notice that preceding results improve upon classical concentration of measure (namely \eqref{eq.sudakov.tsirelson} and \eqref{eq.poincare.gaussien}) and can also be used for other functionals such as the Median.
\end{rem}

Throughout all the article $C$ will stand for a positive numerical constant which may change at each occurence. 
\section{Tools and proofs of the main results}
\subsection{Basics facts}
First, let us expose the elementary tools from Optimal Transport, on the real line, that will be needed in the sequel. 
\noindent  We want to highlight the fact that we will mostly choose (in practice) $\nu$ as the Exponential measure on $\R_+$ (or as the symmetric Exponential mesure on $\R$) from which we will improve some concentration properties satisfied by the measure of interest $\mu$. However, when stated, we will not specify the measure $\mu$ and $\nu$ in our results.  \\
\newline
Recall that the monotone transport from $\nu$ to $\mu$ (cf. \cite{Vil2} for more details) is obtained by an application  $t:\R\to\R$ such that, for every $x\in \R$,

\begin{equation}\label{eq.monge.ampere1}
G(x)=\p(Y\leq x)=\int_{-\infty}^xd\nu=\int_{-\infty}^{t(x)}d\mu=\p\big(X\leq t(x)\big)=H\big(t(x)\big),\quad x\in\R.
\end{equation}

\noindent Which leads, after differentiation, to the following equality

\begin{equation}\label{eq.monge.ampere2}
g(x)=h\big(t(x)\big)t'(x),\quad x\in \R
\end{equation}

\noindent Then, the application $T\,:\,\R^n\to\R^n$ defined by $T(x)=\big(t(x_1),\ldots,t(x_n)\big)$, for every $x=(x_1,\ldots,x_n)\in~\R^n$  transports $\nu^n$ on $\mu^n$. In particular, for any $f\,:\,\R^n\to \R$ smooth enough,

$$
{\rm Var}_{\mu^n}(f)={\rm Var}_{\nu^n}\big(f\circ T\big).
$$

The following Lemma (cf. \cite{Led}) will also be useful in the sequel.

\begin{lem}\label{lem.poincare.exponentiel.transport}

Let $X$ a centered random variable such that, for any  $0<\theta<\frac{1}{2\sqrt{K_n}}$, 

$$
{\rm Var}(e^{\theta X/2})\leq \frac{\theta^2}{4}K_n\E[e^{\theta X}],
$$

\noindent then $\p(X\geq t\sqrt{K_n})\leq 3e^{-ct}$ for every $t\geq 0$, with $c>0$ a numerical constant.
\end{lem}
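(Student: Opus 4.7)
The plan is to turn the hypothesis into a tractable functional inequality on the Laplace transform $L(\theta)=\E[e^{\theta X}]$, iterate it down to a Gaussian-type bound $L(\theta)\le e^{K_n\theta^2}$ on the admissible range, and then optimise in Chernoff's inequality, patching together the small-$t$ and large-$t$ regimes.

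First I would rewrite the variance on the left as $L(\theta)-L(\theta/2)^2$, so that the assumption reads
\[
L(\theta)\Bigl(1-\frac{K_n\theta^2}{4}\Bigr)\le L(\theta/2)^2,\qquad 0<\theta<\tfrac{1}{2\sqrt{K_n}}.
\]
Since on this range $K_n\theta^2/4\le 1/16<1$, one can solve for $L(\theta)$ and iterate this inequality: replacing $\theta$ by $\theta/2,\theta/4,\dots,\theta/2^{k-1}$ and multiplying yields
\[
L(\theta)\le \frac{L(\theta/2^{k})^{2^{k}}}{\prod_{j=0}^{k-1}\bigl(1-K_n\theta^2/4^{j+1}\bigr)^{2^{j}}}.
\]
Because $X$ is centred, a Taylor expansion gives $L(\theta/2^k)=1+O(2^{-2k})$, so $L(\theta/2^k)^{2^k}\to 1$ as $k\to\infty$. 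Passing to the limit and using $-\log(1-u)\le 2u$ for $0\le u\le 1/2$, one finds
\[
\log L(\theta)\le \sum_{j\ge 0}2^{j}\cdot \frac{2K_n\theta^2}{4^{j+1}}=K_n\theta^2,
\]
so $L(\theta)\le e^{K_n\theta^2}$ on $0<\theta<\tfrac{1}{2\sqrt{K_n}}$.

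From here Chernoff's bound gives $\p(X\ge t)\le e^{-\theta t+K_n\theta^2}$ for every admissible $\theta$. For $t\le \sqrt{K_n}$ the optimum $\theta^{\ast}=t/(2K_n)$ lies in the allowed range and yields a subgaussian bound $e^{-t^2/(4K_n)}$; for $t>\sqrt{K_n}$ I would instead choose $\theta$ close to the upper endpoint $1/(2\sqrt{K_n})$, which produces a linear-in-$t$ bound of the form $e^{1/4}e^{-t/(2\sqrt{K_n})}$. Rewriting both regimes in the variable $s=t/\sqrt{K_n}$ gives $\p(X\ge s\sqrt{K_n})\le e^{-s^2/4}$ for $s\le 1$ and $\le e^{1/4}e^{-s/2}$ for $s\ge 1$, which combines into $3e^{-cs}$ for a universal $c>0$, as claimed.

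The only delicate step is the iteration: one has to keep track that all the arguments $\theta/2^{k}$ stay inside the validity range of the hypothesis (which they trivially do since the range only shrinks), and to check that the infinite product indeed telescopes to a clean exponential constant. The centering assumption is essential here, as it is what kills the $2^{k}$-fold power of $L(\theta/2^{k})$ in the limit; without it one would pick up a spurious factor $e^{\theta\E[X]}$ and the proof would fail. Everything else is Taylor expansion, summing a geometric series, and the two-regime Chernoff optimisation, which are routine.
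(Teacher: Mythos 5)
Your proof is correct, and it is essentially the classical argument: the paper does not prove this lemma but cites \cite{Led}, where the same dyadic iteration of $L(\theta)\le L(\theta/2)^2(1-K_n\theta^2/4)^{-1}$ (the Aida--Stroock/Ledoux scheme for passing from a variance--Laplace-transform bound to exponential integrability), followed by the two-regime Chernoff optimisation, is exactly how the result is obtained.
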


\begin{rem}
This Lemma has been fruitfully used in recent articles about Superconcentration (cf. \cite{DHS1,DHS2, KT}).
\end{rem}

\noindent The preceding Lemma will be combined with Harris's negative association inequality (cf. \cite{BLM}) in order to prove the deviation inequality from Theorem \ref{thm.inegalite.deviation.transport}. \\

Now, let us state Harris's result. Recall that a  fonction $f\,:\,\R^n\to \R$ is considered to be non-increasing (respectively non-decreasing) if it is non-increasing, (respectively non-decreasing) in each coordinates while the others are fixed.

\begin{prop}\label{prop.association.negative.harris}[Harris]
Let  $f\,:\,\R^n\to\R$ be a non-decreasing function and $g\,:\,\R^n\to\R$ be a non-increasing function . Let $X_1,\ldots,X_n$ be independant random variables and set $X=(X_1,\ldots,X_n)$. Then
 
\begin{equation}\label{eq.association.negative.harris}
\E\big[f(X)g(X)\big]\leq\E\big[f(X)\big]\E\big[g(X)\big].
\end{equation}
\end{prop}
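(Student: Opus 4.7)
The plan is to prove Harris's inequality by induction on the dimension $n$, treating separately the one-dimensional base case and then a conditioning argument for the inductive step. Throughout, I tacitly assume enough integrability for the expectations to make sense (e.g.\ $f(X), g(X) \in L^2$).

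For the base case $n=1$, I would use the classical "two independent copies" trick. Let $X$ and $X'$ be independent copies of $X_1$. Since $f$ is non-decreasing and $g$ is non-increasing on $\R$, the product $\big(f(X)-f(X')\big)\big(g(X)-g(X')\big)$ is pointwise non-positive. Taking expectations, expanding, and using that $(X,X')$ and $(X',X)$ have the same law yields
\[
0 \geq \E\big[\big(f(X)-f(X')\big)\big(g(X)-g(X')\big)\big] = 2\E[f(X)g(X)] - 2\E[f(X)]\E[g(X)],
\]
which is the desired inequality.

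For the inductive step, assume the result holds in dimension $n-1$ and consider $X=(X_1,\ldots,X_n)$ with $X_i$ independent. Define
\[
F(x_n) = \E\big[f(X_1,\ldots,X_{n-1},x_n)\big], \qquad G(x_n)=\E\big[g(X_1,\ldots,X_{n-1},x_n)\big].
\]
Because $f$ is coordinatewise non-decreasing, $F$ is non-decreasing in $x_n$ (integration over the independent $X_1,\ldots,X_{n-1}$ preserves monotonicity in the remaining variable); similarly $G$ is non-increasing. Fixing $x_n$, the maps $x\mapsto f(x,x_n)$ and $x\mapsto g(x,x_n)$ are non-decreasing and non-increasing on $\R^{n-1}$ respectively, so the induction hypothesis applied to $(X_1,\ldots,X_{n-1})$ gives
\[
\E\big[f(X)g(X)\mid X_n=x_n\big] \leq F(x_n)\,G(x_n).
\]

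Taking expectation over $X_n$, the tower property together with the $n=1$ case applied to the monotone functions $F$ and $G$ of the single variable $X_n$ yields
\[
\E[f(X)g(X)] \leq \E\big[F(X_n)G(X_n)\big] \leq \E[F(X_n)]\,\E[G(X_n)] = \E[f(X)]\,\E[g(X)],
\]
completing the induction. The only delicate point, and thus the "main obstacle", is the verification that the conditional-expectation functions $F$ and $G$ inherit the appropriate monotonicity; this is what makes the reduction to $n=1$ clean but relies crucially on the independence of the coordinates (without it, conditioning could destroy monotonicity).
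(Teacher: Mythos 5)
Your proof is correct and is the standard argument for Harris's (FKG-for-product-measures) inequality: the two-independent-copies trick for $n=1$, followed by induction via conditioning on the last coordinate, with the key observation that independence lets the conditional expectations $F$ and $G$ inherit monotonicity. Note, however, that the paper itself gives no proof of this proposition --- it is stated as a known result and cited to Boucheron--Lugosi--Massart \cite{BLM} --- so there is no argument in the text to compare against. Your write-up is a valid self-contained proof of that cited result; the one point worth being slightly more explicit about is that the identity $\E\big[f(X)g(X)\mid X_n=x_n\big]=\E\big[f(X_1,\ldots,X_{n-1},x_n)\,g(X_1,\ldots,X_{n-1},x_n)\big]$ uses independence to replace the conditional law of $(X_1,\ldots,X_{n-1})$ given $X_n$ by its unconditional law, which is exactly what lets the induction hypothesis bite.
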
 

\begin{rem}
 As we will explain in details later, Harris' negative association was a crucial argument in \cite{BT} when they studied order statistics.
\end{rem}

\subsection{Variance bounds}
We give below the proof of Theorem \ref{thm.tanguy.transport1}.

\begin{proof}
Since $T$ transports $\nu^n$ on $\mu^n$, we have

$$
{\rm Var}_{\mu^n}(f)={\rm Var}_{\nu^n}\big(f\circ T\big).
$$

\noindent Then, one can apply the Poincar\'e's inequality, satisfied by the measure $\nu^n$, to the function $f\circ T$  :

$$
{\rm Var}_{\nu^n}(f\circ T)\leq C_P \sum_{i=1}^n\int_{\R^n}(\partial_i f)^2\circ T(x)t'^2(x_i)d\nu^n(x).
$$

\noindent Besides, relation \eqref{eq.monge.ampere2} yields that

$$
t'(x)=\frac{g(x)}{1-G(x)}\times \frac{1-H\big(t(x)\big)}{h\big(t(x)\big)}=\frac{\kappa_\nu(x)}{\kappa_\mu\big(t(x)\big)}, \quad x\in \R
$$

\noindent under the condition that $h(x)>0,x\in\R$.

\end{proof}

\begin{rem}
As we will see on the examples, the important step will be to estimate the behaviour of the transport map $t$ in order to get some relevant bound on the variance of various functionals.\\

 Notice that this approach is reminiscent of some previous work of Barthe and Roberto \cite{Bar} or Gozlan \cite{Goz}  on the so-called weighted Poincar\'e's inequalities on the real line. Although our approach is similar in nature, the method of Barthe and Roberto relies on Hardy's inequality whereas ours is based on monotone rearrangement argument on the real line. Our methodology is very similar to Gozlan's work \cite{Goz} (in his article the transport map $T$ is denoted by $\omega^{-1}$).
 \end{rem}

\subsection{Deviation inequality}
Now, let us prove Theorem \ref{thm.inegalite.deviation.transport} with the combination of Theorem \ref{thm.tanguy.transport1} together with Lemma \ref{lem.poincare.exponentiel.transport} and Proposition \ref{prop.association.negative.harris}.\\

 Recall that, given an i.i.d. sample $X_1,\ldots,X_n$ with common law $\mu$ we define $M_n,\,n\geq 1,$ as 
$$
M_n=\max_{i=1,\ldots,n}X_i.
$$

\begin{proof}[Theorem \ref{thm.inegalite.deviation.transport}]
For any $\theta>0$, apply Theorem \ref{thm.tanguy.transport1} to, a suitable approximation of, the function $e^{\theta f}$ with $f(x)=\max_{i=1,\ldots,n}x_i$ and notice that the partial derivatives $\partial_if=1_{A_i}$ with $A_i=\{x_i=\max_{j=1,\ldots,n}x_j\}$, for $ i=1,\ldots,n$, form a partition of $\R^n$ (that is to say $\sum_{i=1}^n1_{A_i}=1$). So, it yields 

\begin{eqnarray*}
{\rm Var}(e^{\theta M_n/2})&\leq& C\frac{\theta^2}{4}\sum_{i=1}^n\E\bigg[1_{A_i}\psi(X_i)^2e^{\theta M_n}\bigg]\\
&=&C\frac{\theta^2}{4}\E\bigg[e^{\theta M_n}\psi(M_n)^2\bigg],
\end{eqnarray*}

\noindent with $M_n=\max_{i=1,\ldots,n} X_i$. Then, under the hypothesis of Theorem \ref{thm.inegalite.deviation.transport}, use Harris's inequality \eqref{prop.association.negative.harris}. Thus, 

\begin{eqnarray*}
{\rm Var}(e^{\theta M_n/2})&\leq&C\frac{\theta^2}{4}\E[e^{\theta M_n}]\E\big[\psi(M_n)^2\big]\\
&\leq& C\frac{\theta^2}{4}\epsilon_n\E[e^{\theta M_n}]\\
\end{eqnarray*}





The conclusion follows easily with Lemma \ref{lem.poincare.exponentiel.transport}.
\end{proof}

\section{Applications}

In this section, we provide some applications, in different mathematical areas, of Theorem \ref{thm.tanguy.transport1} and Theorem \ref{thm.inegalite.deviation.transport}.

\subsection{Extreme Theory}

We refer to \cite{Lead,DeHaan} for more details about Extreme Theory. Recall that, given a probability  measure $\mu$ (absolutely continuous with respect to the Lebesgue measure) and an i.i.d. sample $X_1,\ldots X_n$ with $\mathcal{L}(X_1)=\mu$, it is a classical fact that one can find renormalizing constants $a_n$ and $b_n$ such that $a_n(M_n-b_n)$  (where $M_n=\max_{i=1,\ldots,n}X_i$) converges in distribution as $n\to\infty$ and the limiting distributions are now fully caracterized. We will show that our main results are revelant to produce non-asymptotic variance bounds and deviation inequality in accordance to Extreme Theory.\\

Let us begin at the level of the variance.

\subsubsection{Non-asymptotics variance bounds}
 
Let us start with a pedagogical example from the Weibull's domain of attraction. To do so, we choose $\nu$ as the standard Exponential measure on $\R_+$ (that is to say $H(x)=1-e^{-x}$ if $x\geq 0$, $H(x)=0$ otherwise). Then, Theorem \ref{thm.tanguy.transport1} yields the following Corollary

\begin{cor}\label{exp}
If $Y$ follows a standard Exponential distribution on $\R_+$ then, for any function $f\,:\,\R^n\to\R$ smooth enough and every $n\geq 1$,

\begin{equation}
{\rm Var}\big(f(X)\big)\leq 4\sum_{i=1}^n\E\bigg[\bigg(\frac{\partial_if(X)}{\kappa_\mu(X_i)}\bigg)^2\bigg],
\end{equation}

\noindent where $X_1,\ldots,X_n$ are independant random variables with distribution $\mu$.\\
\newline

\noindent In particular, for (any smooth approximation of) $f(x)=\max_{i=1,\ldots,n} x_i$, 

\begin{equation}\label{PSC}
{\rm Var}\big(M_n\big)\leq C \E\bigg[\bigg(\frac{1}{\kappa_\mu(M_n)}\bigg)^2\bigg], 
\end{equation}
where $M_n=\max_{i=1,\ldots, n} X_i$ and $C>0$ is a numerical constant.
\end{cor}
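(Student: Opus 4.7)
The plan is to specialize Theorem \ref{thm.tanguy.transport1} to the case where $\nu$ is the standard exponential measure on $\R_+$, for which every ingredient collapses to something explicit.

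First I would verify the two inputs Theorem \ref{thm.tanguy.transport1} needs for this choice of $\nu$. The standard exponential $d\nu(x)=e^{-x}dx$ on $\R_+$ satisfies a Poincaré inequality with optimal constant $C_\nu=4$ (this is the classical Bobkov--Ledoux spectral gap for the exponential), so the hypothesis on $\nu$ is met. Next, for the exponential one has $g(x)=1-G(x)=e^{-x}$, so the hazard function is the constant
$$
\kappa_\nu(x)=\frac{g(x)}{1-G(x)}=1,\quad x\ge 0.
$$
Consequently the weight appearing in Theorem \ref{thm.tanguy.transport1} simplifies to
$$
\Bigl(\frac{\kappa_\nu(Y_i)}{\kappa_\mu(t(Y_i))}\Bigr)^2=\frac{1}{\kappa_\mu(t(Y_i))^2}.
$$

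Then I would plug this into Theorem \ref{thm.tanguy.transport1} and use that $T=(t,\dots,t)$ pushes $\nu^n$ onto $\mu^n$ to rewrite the expectation back on the $X$ side:
$$
{\rm Var}(f(X))\le 4\sum_{i=1}^n\E\Bigl[(\partial_if)^2(T(Y))\,\kappa_\mu(t(Y_i))^{-2}\Bigr]=4\sum_{i=1}^n\E\Bigl[\frac{(\partial_i f(X))^2}{\kappa_\mu(X_i)^2}\Bigr],
$$
which is the stated general inequality.

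For the consequence on $M_n=\max_i X_i$, I would apply the above to a smooth approximation of $f(x)=\max_{i=1,\dots,n}x_i$. Because the events $A_i=\{x_i=\max_jx_j\}$ form a partition of $\R^n$ (up to a negligible set), in the limit we have $\partial_if=\mathbf 1_{A_i}$ with $\sum_{i=1}^n\mathbf 1_{A_i}=1$, and on $A_i$ one has $X_i=M_n$. Therefore
$$
\sum_{i=1}^n\E\Bigl[\frac{\mathbf 1_{A_i}}{\kappa_\mu(X_i)^2}\Bigr]=\E\Bigl[\frac{1}{\kappa_\mu(M_n)^2}\Bigr],
$$
yielding \eqref{PSC} with $C=4$. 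The only nontrivial points in this argument are recalling the exponential Poincaré constant and justifying the smooth approximation step for the non-differentiable maximum; neither is a genuine obstacle, and the latter is a standard density/mollification argument already implicitly used elsewhere in the paper.
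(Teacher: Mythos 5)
Your proof is correct and follows essentially the same route as the paper: specialize Theorem \ref{thm.tanguy.transport1} with $\nu$ the one-sided exponential (so $\kappa_\nu\equiv 1$ and $C_\nu=4$), and for the maximum use that the partial derivatives $\partial_i f=\mathbf 1_{A_i}$ form a partition with $X_i=M_n$ on $A_i$. The paper states the first part as a ``straightforward application'' of the theorem and reuses the same partition argument elsewhere; you have simply filled in the details.
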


\noindent In particular, if $\mu$ stands for the uniform measure on $[0,1]$ we have 

$$
{\rm Var}(M_n)\leq 4\E[(1-M_n)^2]=O(1/n^2).
$$

\begin{proof}
The first part is a straightforward application of Theorem \eqref{thm.tanguy.transport1}.\\

Now, If $\mu$ stands for the uniform measure on $[0,1]$ we have  $\kappa_{\mu}(x)=1_{x\in[0,1]}\frac{1}{1-x}$. Therefore,

\[
{\rm Var}(M_n)\leq 4\E[(1-M_n)^2]
\]

It is now an easy task to show that the preceding inequality is sharp. Indeed, for any $t\in[0,1]$, $\p(M_n\leq t)=t^n$ this implies that the maximum $M_n$ admits $t\mapsto nt^{n-1}1_{[0,1]}$ as density with respect to the Lebesgue measure. \\
\newline
\noindent Thus, 

$$
\E[M_n]=\int_0^1nt^ndt=\frac{n}{n+1}
$$

\noindent and 

$$
\E[M_n^2]=\int_0^1nt^{n+1}dt=\frac{n}{n+2}.
$$

\noindent Therefore, ${\rm Var}(M_n)=\frac{n}{n+1}-\frac{n^2}{(n+1)^2}=\frac{2n}{(n+2)(n+1)^2}=O(1/n^2)$. The same estimates also imply that 

$$
\E[(1-M_n)^2]=O\big(1/n^2\big)
$$
 \end{proof}
\begin{rem}
\begin{enumerate}
\item Recall that,  $n(M_n-1)$ converge in law toward the Weibull distribution. So, the preceding bound is the correct order of the variance of $M_n$.\\

\item More generally, if $\mu$ stands for the Beta law with parameter $a,b>0$, it is not difficult to show that, for every $x\in[0,1]$,

\begin{eqnarray*}
 \frac{1}{\kappa_{\mu}(x)}&=&\frac{\int_x^1(1-t)^{b-1}t^{a-1}dt}{x^{a-1}(1-x)^{b-1}}\\
 &\leq& \min\bigg(\frac{1}{a}\frac{(1-x^a)}{x^{a-1}},\frac{1}{b}\frac{(1-x)}{x^{a-1}}\bigg)
 \end{eqnarray*}
 
 \noindent Notice that if  $a=b=1$ we recover the estimates for the uniform measure. When $a>0$ and $b>0$  it seems hard to achieve the expected bound (of order $n^{-b}$) on the variance from the preceding estimate of $\kappa_\mu$. \\
 
\item It is also possible to send the standard exponential measure toward the Par\'eto distribution (which belongs to the Fr\'echet domain of attraction), however this leads to a trivial bound which is not really relevant.\\
\end{enumerate}
\end{rem}

Now, let us focus on the domain of attraction of the Gumbel distribution. To this task, we will transport the symmetric Exponential mesure (on $\R$) $\nu$ towards strictly log-concaves measure $\mu$ (on $\R$) (the standard Gaussian measure for instance).\\

Recall that $\nu$ admits the following density $g(x)=\frac{1}{2}e^{-|x|}$ with respect to the Lebesgue and admits $G(x)=\frac{1}{2}e^x$ if $x\leq 0$, $G(x)=1-\frac{1}{2}e^{-x}$ if $x>0$ as a cumulative distribution function. Elementary calculus yields that

\begin{equation}\label{eq.fonction.risque.exponentielle.symetrique}
\kappa_\nu(x)=
\left\{
\begin{array}{ll}
1,\quad x> 0,\\
\frac{1}{2e^{-x}-1},\quad x\leq 0.
\end{array}
\right.
\end{equation}

\noindent  Thus, Theorem \ref{thm.tanguy.transport1}  implies the following Corollary
\begin{cor}\label{2exp}
If $Y$ follows the symmetric Exponential distribution on $\R$ then, for any functions $f\,:\,\R^n\to\R$ smooth enough,
 
\begin{equation}
{\rm Var}\big(f(X)\big)\leq 4\sum_{i=1}^n\E\bigg[\partial_i^2f(X)\bigg(\frac{\kappa_{\nu}\big(t^{-1}(X_i)\big)}{\kappa_\mu(X_i)}\bigg)^2\bigg].
\end{equation}
where $X=(X_1,\ldots,X_n)$ has for distribution $\mu^n$.
\end{cor}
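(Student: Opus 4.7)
The plan is to invoke Theorem \ref{thm.tanguy.transport1} directly, with $\nu$ specialized to the symmetric Exponential measure on $\R$, and then to rewrite the resulting bound in terms of the $X$-variables rather than the auxiliary $Y$-variables.

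The first step I would carry out is to pin down the Poincar\'e constant of $\nu$. For the symmetric Exponential, the classical value is $C_\nu=4$: this can be derived by splitting a centered test function at the median $0$ and applying the well-known one-sided Poincar\'e inequality with constant $4$ satisfied by the standard Exponential distribution on $\R_+$ on each half-line. I would simply cite this and move on.

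The second step is to plug $C_\nu=4$ into the statement of Theorem \ref{thm.tanguy.transport1}, which yields
\[
{\rm Var}\bigl(f(X)\bigr)\leq 4\sum_{i=1}^n \E\bigg[(\partial_i f)^2\circ T(Y)\,\bigg(\frac{\kappa_\nu(Y_i)}{\kappa_\mu\bigl(t(Y_i)\bigr)}\bigg)^{2}\bigg].
\]
Then, since $T$ pushes $\nu^n$ forward to $\mu^n$ and acts coordinatewise through the monotone map $t$, the identity $X_i=t(Y_i)$, i.e.\ $Y_i=t^{-1}(X_i)$, transforms the expectation against $\nu^n$ into an expectation against $\mu^n$:
\[
{\rm Var}\bigl(f(X)\bigr)\leq 4\sum_{i=1}^n \E\bigg[(\partial_i f)^2(X)\,\bigg(\frac{\kappa_\nu\bigl(t^{-1}(X_i)\bigr)}{\kappa_\mu(X_i)}\bigg)^{2}\bigg],
\]
which is exactly the announced inequality.

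I do not expect any genuine obstacle: the corollary is essentially a change-of-variable reformulation of Theorem \ref{thm.tanguy.transport1}, and the only technical input is the value of the Poincar\'e constant of the symmetric Exponential, which is standard. If one wanted to be thorough, one could also note that the explicit formula \eqref{eq.fonction.risque.exponentielle.symetrique} for $\kappa_\nu$ is available, so the ratio $\kappa_\nu(t^{-1}(X_i))/\kappa_\mu(X_i)$ can in principle be made fully explicit once a specific target $\mu$ (e.g.\ Gaussian) is chosen; this prepares the ground for the deviation-type applications that follow.
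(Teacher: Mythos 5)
Your proposal is correct and matches the paper's (implicit) argument: the paper does not give a separate proof of Corollary~\ref{2exp} but simply states that it follows from Theorem~\ref{thm.tanguy.transport1}, together with the remark that $4$ is the Poincar\'e constant of the symmetric Exponential measure, which is precisely what you do. You also correctly read the paper's $\partial_i^2 f(X)$ as the typo it is for $(\partial_i f)^2(X)$, consistent with Theorem~\ref{thm.tanguy.transport1}.
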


\begin{rem}
 Here, the constant $4$ stands for the Poincar\'e constant of the symmetric Exponential measure (cf. \cite{BGL}).
\end{rem}

To illustrate the preceding Corollary, we will need a technical Lemma. This one is a precise estimation of the behaviour of the transport function which will permit to obtain relevant bounds for the variance of the maximum of symmetric (strictly) log-concave measure  $d\mu(x)=e^{-V(x)}Z^{-1}dx$ with $Z$ a normalizing constant (\textit{e.g.} $V(x)=|x|^{\alpha}/\alpha,\, \alpha>1$).

\begin{lem}\label{Log}
Consider the transport map $t$ sending the symmetric of the Exponential measure $\nu$ toward the measure $d\mu(x)=e^{-V(x)}Z^{-1}dx$, where $V(x)=|x|^{\alpha}/\alpha,\, \alpha>1$. Then, the following holds 
$$
|t'\circ t^{-1}(x)|\leq \frac{C_\alpha}{V'(|x|)+1},\quad x\in\R
$$

\noindent with $C_\alpha>0$ a numerical constant only depending on $\alpha$.
\end{lem}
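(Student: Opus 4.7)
The plan is to compute $t'\circ t^{-1}$ explicitly using the identity \eqref{eq.monge.ampere2}, which gives $t'(y)=\kappa_\nu(y)/\kappa_\mu(t(y))$ and hence
$$
t'\circ t^{-1}(x)=\frac{\kappa_\nu(t^{-1}(x))}{\kappa_\mu(x)}.
$$
Since both $\mu$ and $\nu$ are symmetric about the origin, the identity $G(-x)=1-G(x)=1-H(t(x))=H(-t(x))$ forces $t(-x)=-t(x)$, so $|t'\circ t^{-1}|$ is even and it is enough to treat $x\geq 0$. On $\R_+$ one has $t^{-1}(x)\geq 0$, and by \eqref{eq.fonction.risque.exponentielle.symetrique} $\kappa_\nu(t^{-1}(x))=1$. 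Unpacking $\kappa_\mu(x)=h(x)/(1-H(x))$, the claim therefore reduces to the one-sided Mills-type estimate
$$
\frac{1}{\kappa_\mu(x)}=e^{V(x)}\int_x^{+\infty}e^{-V(y)}\,dy\leq \frac{C_\alpha}{V'(x)+1},\qquad x\geq 0.
$$

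For the tail regime $x\geq 1$ I would exploit the monotonicity of $V'(y)=y^{\alpha-1}$ on $\R_+$: since $V'(y)\geq V'(x)>0$ for $y\geq x>0$,
$$
\int_x^{+\infty}e^{-V(y)}\,dy\leq \frac{1}{V'(x)}\int_x^{+\infty}V'(y)e^{-V(y)}\,dy=\frac{e^{-V(x)}}{V'(x)},
$$
which gives $1/\kappa_\mu(x)\leq 1/V'(x)\leq 2/(V'(x)+1)$ as soon as $V'(x)\geq 1$. For the bounded regime $x\in[0,1]$ the function $x\mapsto e^{V(x)}\int_x^{+\infty}e^{-V(y)}\,dy$ is continuous and finite (equal to $Z/2$ at $x=0$), while $1/(V'(x)+1)\geq 1/(V'(1)+1)=1/2$ is bounded below, so a compactness argument absorbs both constants into a single $C_\alpha$.

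The only real subtlety is the crossover at $x=0$: the naive Mills bound $1/V'(x)$ blows up there because $V'(0)=0$, and it is precisely the harmless $+1$ in the denominator of the claimed bound that accommodates this vanishing derivative. Once one notices that the right-hand side is designed to remain finite at the origin, the argument splits cleanly into $[0,1]$ (handled by continuity and the value $Z=Z_\alpha$) and $[1,+\infty)$ (handled by the integration-by-parts estimate above), with all constants depending only on $\alpha$.
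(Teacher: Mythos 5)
Your proof is correct and, in one important respect, genuinely cleaner than the paper's. Both you and the paper start from the same identity $t'\circ t^{-1}(x)=\kappa_\nu(t^{-1}(x))/\kappa_\mu(x)$ and rely on the Mills-type bound $e^{V(x)}\int_x^{\infty}e^{-V(y)}\,dy\leq 1/V'(x)$ for $x$ away from the origin, with a compactness argument absorbing the remaining bounded region. The key difference is your opening observation that the monotone transport between two symmetric laws is odd, $t(-x)=-t(x)$, so that $|t'\circ t^{-1}|$ is an even function and one may restrict to $x\geq 0$, where $\kappa_\nu(t^{-1}(x))\equiv 1$. The paper does not use this symmetry; it instead treats $x<-A$ by a separate and more delicate computation, bounding $t^{-1}(x)$ from above via $t^{-1}(x)\leq\ln\bigl(2H(x)\bigr)$ and then estimating $1/(2e^{-t^{-1}(x)}-1)$, with an extra intermediate bound $1/\kappa_\mu(x)\leq 2e^{V(x)}$. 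Your symmetry reduction eliminates that whole branch and makes the argument shorter and easier to check. A second, minor difference is that you supply a self-contained integration-by-parts derivation of the Mills bound, whereas the paper cites \cite{Ane}; both are fine, but your version is more elementary. One small thing worth making explicit is that the symmetry of $\mu$ does hold here because $V(x)=|x|^{\alpha}/\alpha$ is even, so $H(-x)=1-H(x)$ — your chain $G(-x)=1-G(x)=1-H(t(x))=H(-t(x))$ then indeed forces $t(-x)=-t(x)$ by strict monotonicity of $H$.
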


\begin{proof}
We would like to bound, for any $x\in\R$, the following ratio 

\begin{equation}\label{eq.log.concave.transport}
t'\circ t^{-1}(x)=\frac{\kappa_{\nu}\big(t^{-1}(x)\big)}{\kappa_{\mu}(x)},
\end{equation}

\noindent with $\kappa_\nu$ defined by \eqref{eq.fonction.risque.exponentielle.symetrique} and $\kappa_\mu(x)=e^{-V(x)}Z^{-1}\int_x^\infty e^{-V(t)}dt,\, x\in\R$. Recall that
$$
t^{-1}(x)=G^{-1}\circ H(x), \,x\in\R
$$

\noindent  with

\begin{equation}
G^{-1}(y)=
\left\{
\begin{array}{ll}
\ln(2y), \quad 0\leq y\leq 1/2,\\
\ln\bigg(\frac{1}{2(1-y)}\bigg),\quad 1/2\leq y\leq1.
\end{array}
\right.
\end{equation}

Let $A>0$ be sufficiently large. For $x>A$, the equation \eqref{eq.log.concave.transport} is easily bounded by standard estimates (cf. \cite{Ane}), we get

$$
|t'\circ t^{-1}(x)|=e^{V(x)}\int_x^\infty e^{-V(t)}dt\leq \frac{C}{V'(x)},
$$

\noindent avec $C>0$.\\
\newline
For $x$ belonging to the compact $[0,A]$, there exists $C>0$ such that $|t'\circ t^{-1}(x)|\leq C$. To sum up, 

$$
|t'\circ t^{-1}(x)|\leq \frac{C}{V'(x)+1},\,x>0
$$

\noindent For $x=0$ we have $|t'\circ t^{-1}(x)|=1$ since  $t^{-1}(0)=G^{-1}\circ H(0)=G^{-1}(1/2)=0$ by symmetry.\\
\newline
Now if, $x<-A$, we get 

$$
|t'\circ t^{-1}(x)|\leq \frac{2e^{V(x)}}{2e^{-t^{-1}(x)}-1}
$$

\noindent since $\frac{1}{\kappa_{\mu}(x)}=\frac{e^{V(x)}}{\int_x^\infty e^{-V(t)}dt}\leq 2e^{V(x)}$ for $x\geq 0$.\\
\newline
\noindent So, it is enough to bound from above $t^{-1}(x)$ when $x<-A$ in order to conclude. Using the symmetry of the law $\mu$, we obtain
\begin{eqnarray*}
t^{-1}(x)\leq \ln \big(2H(x)\big)&=&\ln \bigg( 2 \big[1-H(-x)\big]\bigg)\\ 
&\leq &\ln \bigg[ \frac{2e^{-V(-x)}}{V'(-x)}\bigg ].
\end{eqnarray*}

\noindent Thus, for $x<-A$, 

$$
|t'\circ t^{-1}(x)|\leq \frac{2e^{V(-x)}}{V'(-x)e^{V(-x)}-1}\leq \frac{C}{V'(-x)}.
$$

\noindent Similarly, when $-A\leq x\leq 0$, we also obtain that $|t'\circ t^{-1}(x)|\leq C$

\noindent Finally, all of this can be rewritten as follows

$$
\bigg|\frac{\kappa_\nu(t^{-1}(x))}{\kappa_\nu(x)}\bigg|\leq \frac{C}{V'(|x|)+1},
$$

\noindent with $C>0$.
\end{proof}







If $V$ is the quadratic potential associated to the standard Gaussian measure, we obtain, thanks to Lemma \ref{Log} and Corollary \ref{2exp}, the following  result (as announced in the introduction).

\begin{prop}\label{prop.poincare.poids.gaussien}
For $f\,:\,\R^n\to\R$ smooth enough, we have

\begin{equation}\label{eq.poincare.poids.gaussien}
{\rm Var}_{\gamma_n}(f)\leq C\sum_{i=1}^n\E_{\gamma_n}\bigg[(\partial_i f)^2(X)\bigg(\frac{1}{1+|X_i|}\bigg)^2\bigg]
\end{equation}

\noindent In particular, applied to (a smooth approximation of) $f(x)=\max_{i=1,\ldots,n}x_i$, we get, for every $n\geq 1$,

\begin{equation}\label{eq.poincare.poids.gaussien.maximum}
{\rm Var}(M_n)\leq C\E\bigg[\frac{1}{1+M_n^2}\bigg]\leq \frac{C}{1+\log n}
\end{equation}
\end{prop}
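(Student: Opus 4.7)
The plan is to specialize the transport machinery already assembled: take $\nu$ to be the symmetric Exponential measure on $\R$ and $\mu=\gamma_1$ the standard Gaussian, so that Corollary~\ref{2exp} directly applies. With $V(x)=x^2/2$ (hence $\alpha=2$ and $V'(|x|)=|x|$) the quadratic potential case of Lemma~\ref{Log} yields
$$
\bigg|\frac{\kappa_{\nu}(t^{-1}(x))}{\kappa_{\gamma_1}(x)}\bigg|=|t'\circ t^{-1}(x)|\leq \frac{C}{1+|x|},\quad x\in\R,
$$
so the squared ratio appearing in Corollary~\ref{2exp} is at most $C/(1+|x|)^2\leq C/(1+x^2)$. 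Substituting into the corollary and using tensorization delivers the weighted Poincaré inequality
$$
{\rm Var}_{\gamma_n}(f)\leq C\sum_{i=1}^n\E_{\gamma_n}\bigg[(\partial_i f)^2(X)\,\frac{1}{(1+|X_i|)^2}\bigg],
$$
which is the first statement (up to the harmless replacement $(1+|x|)^2\asymp 1+x^2$).

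Next I would apply the inequality to a smooth approximation of $f(x)=\max_{i=1,\ldots,n}x_i$. Off a Lebesgue-null set (and so $\gamma_n$-null) the partial derivatives $\partial_i f$ equal the indicators $1_{A_i}$ with $A_i=\{x_i=\max_j x_j\}$, and the $A_i$ form a partition. Consequently,
$$
\sum_{i=1}^n (\partial_i f)^2(X)\,\frac{1}{1+X_i^2}=\sum_{i=1}^n 1_{A_i}(X)\,\frac{1}{1+X_i^2}=\frac{1}{1+M_n^2},
$$
which gives ${\rm Var}(M_n)\leq C\,\E[1/(1+M_n^2)]$. A routine limiting argument (monotone/dominated convergence along a smooth approximation of $\max$) justifies passing to the non-smooth $f$.

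It remains to bound $\E[1/(1+M_n^2)]$ by $C/(1+\log n)$. I would split according to the threshold $\{M_n\geq \sqrt{\log n}\}$ versus $\{M_n<\sqrt{\log n}\}$. On the former event the integrand is at most $1/(1+\log n)$. On the latter we use $\p(M_n<\sqrt{\log n})=\Phi(\sqrt{\log n})^n$ together with the Mills ratio estimate $\bar\Phi(\sqrt{\log n})\geq c/(\sqrt{n\log n})$, yielding
$$
\p(M_n<\sqrt{\log n})\leq \exp\big(-n\,\bar\Phi(\sqrt{\log n})\big)\leq \exp\big(-c\sqrt{n/\log n}\big),
$$
which is negligible compared with $1/\log n$. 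Combining the two contributions gives $\E[1/(1+M_n^2)]\leq C/(1+\log n)$ and finishes the proof.

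The only step that requires any real care is the pointwise transport estimate, but that has already been isolated as Lemma~\ref{Log}; once the quadratic potential is plugged in, everything else reduces to standard Gaussian tail bookkeeping and the indicator identity for partial derivatives of the maximum.
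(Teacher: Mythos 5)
Your proof is correct and follows essentially the same route as the paper: specialize Lemma~\ref{Log} with the quadratic potential and Corollary~\ref{2exp} to get the weighted Poincar\'e inequality, then use the partition $\sum_i 1_{A_i}=1$ for the maximum, split on the threshold $\sqrt{\log n}$, and control $\p(M_n<\sqrt{\log n})$ via the same Gaussian lower tail estimate $\p(X_1>t)\geq \tfrac{t}{\sqrt{2\pi}(1+t^2)}e^{-t^2/2}$.
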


\begin{rem}
Notice that inequality \eqref{eq.poincare.poids.gaussien} has been already obtained, in dimension one, in  \cite{BobLed, BobHou2}.
\end{rem}

\begin{proof}
Indeed, for the function maximum, $\partial_i f=1_{A_i},\,i=1,\ldots,n$ with $A_i=~\{X_i=\max_{j=1,\ldots, n} X_j\}$ and, again, observe that $(A_i)_{i=1,\ldots,n}$ is a partition of $\R^n$. Therefore,

\begin{eqnarray*}
\sum_{i=1}^n\E\bigg[(\partial_i f)^2(X)\bigg(\frac{1}{1+|X_i|}\bigg)^2\bigg]&\leq& \E\bigg[\frac{1}{1+M_n^2}\bigg]\\
&\leq& \frac{1}{1+\log n}+\p(M_n\leq \sqrt{\log n})\\
&\leq &\frac{1}{1+\log n}+\bigg(1-\frac{\sqrt{\log n}}{1+\log n}e^{-\log n/2}\bigg)^n\\
&\leq& \frac{C}{1+\log n}
\end{eqnarray*}

\noindent Since, for every $t\geq 0$, $\p(M_n\leq t)=\big(1-\p(X_1>t)\big)^n$ with $X_1$ a Gaussian standard random variable. Then,  we can use the following estimate  (cf. \cite{Paou} (Lemma 2.5) or the appendix in \cite{Chatt1}) to bound the preceding quantity :  for any $t\geq0$, 
$$
\p(X_1>t)\geq \frac{t}{\sqrt{2\pi}(1+t^2)}e^{-t^2/2}.
$$

\noindent Thus, ${\rm Var}(M_n)\leq \frac{C}{\log n}$.
\end{proof}

\begin{rem}
Let us make few remarks on what preceed. 

\begin{enumerate}
\item As mentionned in the introduction, $\sqrt{2\log n}(M_n-b_n)$ converge, when $n\to\infty$, in law toward the Gumbel distribution (the precise value of $b_n$ is irrelevant here but can be found in \cite{DeHaan, Lead}). So, the preceding Corollary gives a non-asymptotic variance bound of the maximum in accordance with Extreme theory. Besides, such a bound   is classically obtained by hypercontractive and interpolation arguments  (cf. \cite{Chatt1}). Here, we provide an alternative proof based on Optimal Transport arguments.\\

\item Let us further notice that the scheme of proof can also be performed for the function $f(x)={\rm Med}(x_1,\ldots,x_n)$, $n\geq 1$,

\begin{eqnarray*}
{\rm Var}\big({\rm Med}(X)\big)&\leq& \frac{C}{1+n}+C\p({\rm Med}(X)\leq\sqrt n)^{n/2}\\
&\leq & \frac{C}{1+n}+o\bigg(\frac{1}{1+n}\bigg)\leq \frac{C}{1+n}
\end{eqnarray*}

\noindent which correspond to the correct order of magnitude of the variance of the median (cf. \cite{BT}). Notice that, as far as we know, such bounds can not be obtained by hypercontractive arguments.\\
\end{enumerate}
\end{rem}

More generally, if  $V(x)=|x|^\alpha/\alpha,\quad\alpha>1$.  The same proof, together with the Lemma \ref{Log}, yields

\begin{cor}
$$
{\rm Var}(M_n)\leq C\sum_{i=1}^n\E\bigg[(\partial_i f)^2(X)\bigg(\frac{1}{1+V'\big(|X_i|\big)}\bigg)^2\bigg]
$$

\noindent In particular, apply to (a smooth approximation of) $f(x)=\max_{i=1,\ldots,n}x_i$, it gives, for $n\geq N_0$ sufficiently large,

\begin{equation}\label{eq.poincare.poid.logconcave}
{\rm Var}(M_n)\leq C\E\bigg[\frac{1}{V'^2(M_n)+1}\bigg]\leq \frac{C}{1+C_\alpha \ln(n)^{2(\alpha-1)/\alpha}},
\end{equation}

\noindent with $C_\alpha>0$ and $C>0$ some numerical constants.
\end{cor}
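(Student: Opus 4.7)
The plan is to mimic the Gaussian case (Proposition \ref{prop.poincare.poids.gaussien}) with the general weight furnished by Lemma \ref{Log}. First I would combine Corollary \ref{2exp} with Lemma \ref{Log}: the latter gives the pointwise estimate
$$
\left|\frac{\kappa_\nu(t^{-1}(x))}{\kappa_\mu(x)}\right|\leq \frac{C_\alpha}{1+V'(|x|)},
$$
so plugging into Corollary \ref{2exp} immediately yields the announced weighted Poincar\'e inequality, with numerical constant $C=4C_\alpha^2$.

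Applied to a smooth approximation of $f(x)=\max_{i=1,\ldots,n}x_i$, the partial derivatives are indicators $\mathbf{1}_{A_i}$ of the disjoint sets $A_i=\{x_i=\max_jx_j\}$, which satisfy $\sum_i\mathbf{1}_{A_i}=1$. The sum in the weighted Poincar\'e inequality therefore collapses, and using $(1+a)^2\geq 1+a^2$ for $a\geq 0$ together with the symmetry $V'(|x|)^2=V'(x)^2$, one gets
$$
{\rm Var}(M_n)\leq C\,\E\!\left[\frac{1}{(1+V'(|M_n|))^2}\right]\leq C\,\E\!\left[\frac{1}{1+V'^2(M_n)}\right],
$$
which is the first claimed bound.

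It remains to estimate the right-hand side by $C/(1+C_\alpha\log(n)^{2(\alpha-1)/\alpha})$. I would pick a threshold $\beta_n=(\alpha\log n/2)^{1/\alpha}$, strictly below the typical value $(\alpha\log n)^{1/\alpha}$ of $M_n$, and split
$$
\E\!\left[\frac{1}{1+V'^2(M_n)}\right]\leq \frac{1}{1+V'(\beta_n)^2}+\p(M_n<\beta_n).
$$
On $\{M_n\geq \beta_n\}$, $V'(M_n)^2\geq \beta_n^{2(\alpha-1)}=C_\alpha(\log n)^{2(\alpha-1)/\alpha}$, which produces the announced first term. For the remainder, a Mills-type lower bound $\p(X_1>t)\geq c\,e^{-V(t)}/V'(t)$ for $t$ large (the analogue of the Gaussian estimate invoked in Proposition \ref{prop.poincare.poids.gaussien}) gives
$$
\p(M_n<\beta_n)=\bigl(1-\p(X_1>\beta_n)\bigr)^n\leq \exp\!\left(-\frac{cn}{\beta_n^{\alpha-1}}e^{-\beta_n^\alpha/\alpha}\right)=\exp\!\left(-\frac{c\sqrt{n}}{(\log n)^{(\alpha-1)/\alpha}}\right),
$$
since $\beta_n^\alpha/\alpha=(\log n)/2$. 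For $n\geq N_0$ depending only on $\alpha$, this is negligible compared to $(\log n)^{-2(\alpha-1)/\alpha}$, whence the conclusion.

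The main obstacle is the Mills-type lower bound for the measure $e^{-V}/Z\,dx$: one must verify that integration by parts yields $\p(X_1>t)\gtrsim e^{-V(t)}/V'(t)$ for $t$ large enough, analogously to the Gaussian case. This is routine for $V(x)=|x|^\alpha/\alpha$, $\alpha>1$, because $V''(x)/V'(x)^2=(\alpha-1)/(\alpha|x|^{\alpha})\to 0$ as $|x|\to\infty$, which is precisely the condition ensuring that the Mills ratio is asymptotically equivalent to $1/V'(t)$. The constant $N_0$ in the statement absorbs this large-$|x|$ regime.
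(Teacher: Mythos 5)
Your proposal follows the paper's argument essentially step by step: combine Corollary \ref{2exp} with Lemma \ref{Log} to get the weighted Poincar\'e inequality, collapse the sum using the partition $\{A_i\}$ for the maximum, split the expectation at a threshold of order $(\log n)^{1/\alpha}$, and control the remainder probability via a Mills-type lower bound $\p(X_1>t)\gtrsim e^{-V(t)}/V'(t)$. The only cosmetic difference is your threshold $\beta_n=(\alpha\log n/2)^{1/\alpha}$ versus the paper's $(\log n)^{1/\alpha}$: the paper's choice gives $n\,\p(X_1>\beta_n)\gtrsim n^{1-1/\alpha}/(\log n)^{(\alpha-1)/\alpha}\to\infty$ (which already suffices since $\alpha>1$), while yours gives a faster $\sqrt n$ decay at the price of an extra factor $(\alpha/2)^{2(\alpha-1)/\alpha}$ in the leading term — both are absorbed in the constants $C, C_\alpha$ of the statement, so the proof is correct and not materially different from the paper's.
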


\begin{proof}
\begin{eqnarray*}
\E\bigg[\frac{1}{1+|M_n|^{2(\alpha-1)}}\bigg]&\leq& \frac{1}{1+(\log n)^{2(\alpha-1)/\alpha}}+\p(M_n\leq (\ln n)^{1/\alpha}) \\
&\leq&\frac{1}{1+(\log n)^{2(\alpha-1)/\alpha}}+[1-\p(X_1\geq (\ln n)^{1/\alpha})]^n\\
&\leq& \frac{1}{1+(\log n)^{2(\alpha-1)/\alpha}}+\bigg(1-\frac{1}{2(\log n)^{(\alpha-1)/\alpha}n^{1/\alpha}}\bigg)^n\\
&\leq& \frac{C}{1+C_\alpha(\log n)^{2(\alpha-1)/\alpha}}
\end{eqnarray*}

Since, if $X_1$ stands for a random variables with law $\mu$, we can proceed as the Gaussian case. Indeed, $\p(X_1\geq t)\sim \frac{1}{t^\alpha-1}e^{-t^\alpha/\alpha}$ as $t\to\infty$. In particular, for $t$ large enough, this yields that $\p(X_1\geq t)\geq \frac{1}{2t^{\alpha-1}}e^{-t^\alpha/\alpha}$. 
\end{proof}

\begin{rem}

\noindent Following the proof (when $\alpha=2$ ) in \cite{Lead}, it can be easily proved that 

$$
a_n(M_n-b_n)\to \Lambda_0,
$$

\noindent in law, when $n\to\infty$, with $
a_n=\sqrt{\alpha(\log n)^{2(\alpha-1)/\alpha}}$ et $b_n=(\log n)^{1/\alpha}-\frac{\log(\alpha Z)+\frac{\alpha-1}{\alpha}\log \log n}{(\log n)^{(\alpha-1)/\alpha} }$.\\
\newline
Therefore, Corollary gives a non-asymptotic bound of the variance of the maximum reflecting this convergence result. We want to highlight the fact that such bound is another example of the Superconcentration phenomenon. Nevertheless, as far as we know, such estimates can not be obtained by hypercontractive methods (when $\alpha>2$) as the Gaussian case.
\end{rem}

\subsubsection{Deviation inequalities}

It is possible to use the preceding variance bounds to immediately obtain deviation inequalities thanks to Theorem \ref{thm.inegalite.deviation.transport}.

\begin{prop}\label{prop.deviation.gaussian}
The following deviation inequality holds, for any $n\geq 1$,
\begin{equation}\label{eq.gaussien.deviation.droite}
\gamma_n\big(M_n-\E[M_n]\geq t)\leq 3e^{-ct\sqrt{\log n}},\quad t\geq 0
\end{equation}
\end{prop}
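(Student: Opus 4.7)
The plan is to invoke Theorem~\ref{thm.inegalite.deviation.transport} with $\nu$ the symmetric exponential measure on $\R$ and $\mu=\gamma_1$, exploiting the monotone rearrangement bound already furnished by Lemma~\ref{Log}.

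First, I would verify the first hypothesis of Theorem~\ref{thm.inegalite.deviation.transport}. Applying Lemma~\ref{Log} in the case $V(x)=x^2/2$ (so $\alpha=2$) gives
\[
\left|\frac{\kappa_\nu(t^{-1}(x))}{\kappa_\mu(x)}\right|\,=\,|t'\circ t^{-1}(x)|\,\leq\,\frac{C}{1+|x|},\qquad x\in\R.
\]
Since the theorem needs $\psi$ to be non-increasing on $\R$ (this is what unlocks Harris's negative association in its proof), and the right-hand side above is symmetric rather than monotone, I would pass to its non-increasing envelope
\[
\psi(x)\,=\,C\,\ind_{x\leq 0}\,+\,\frac{C}{1+x}\,\ind_{x>0},
\]
which still dominates $C/(1+|x|)$ and is therefore an admissible choice.

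Second, I would bound $\epsilon_n=\E[\psi(M_n)^2]$. The contribution from $\{M_n\leq 0\}$ is at most $C^2\p(M_n\leq 0)=C^2 2^{-n}$, which is negligible. On $\{M_n>0\}$ one has $\psi(M_n)^2\leq C^2/(1+M_n^2)$, and then the exact splitting argument already carried out in the proof of Proposition~\ref{prop.poincare.poids.gaussien} — decompose according to whether $M_n\geq\sqrt{\log n}$ and apply the Gaussian lower tail bound $\p(X_1>t)\geq\frac{t}{\sqrt{2\pi}(1+t^2)}e^{-t^2/2}$ — yields $\E[1/(1+M_n^2)]\leq C/(1+\log n)$. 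Altogether $\epsilon_n\leq C'/\log n$.

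Finally, Theorem~\ref{thm.inegalite.deviation.transport} applies, and unwinding its conclusion through Lemma~\ref{lem.poincare.exponentiel.transport} with $K_n$ of order $\epsilon_n$ yields a right-tail deviation inequality with exponential rate $1/\sqrt{\epsilon_n}$; because $1/\sqrt{\epsilon_n}$ is of order $\sqrt{\log n}$, this is exactly the claimed bound
\[
\gamma_n(M_n-\E[M_n]\geq t)\,\leq\,3e^{-ct\sqrt{\log n}}.
\]
The only subtlety in this plan is the non-monotonicity of the raw estimate on $|t'\circ t^{-1}|$, which is cleanly circumvented by the monotone envelope $\psi$; all remaining ingredients (the transport bound, the tail lower bound for $\gamma_1$, and the moment estimate for $1/(1+M_n^2)$) are already in place in the excerpt.
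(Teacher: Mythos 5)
Your proposal is correct and reconstructs exactly the route the paper intends: the paper does not spell out a proof of Proposition~\ref{prop.deviation.gaussian}, merely asserting that it follows ``immediately'' from Theorem~\ref{thm.inegalite.deviation.transport} and the variance work leading to Proposition~\ref{prop.poincare.poids.gaussien}, and your three steps (Lemma~\ref{Log} with $\alpha=2$, the bound $\epsilon_n\lesssim 1/\log n$ via the same splitting argument, and then Lemma~\ref{lem.poincare.exponentiel.transport}) are precisely the chain of reasoning the author has in mind. The one genuine addition you make is noticing that the raw estimate $|t'\circ t^{-1}(x)|\leq C/(1+|x|)$ from Lemma~\ref{Log} is not monotone, so it cannot be fed directly as $\psi$ into the hypothesis of Theorem~\ref{thm.inegalite.deviation.transport}; replacing it by its non-increasing envelope $\psi(x)=C\,\ind_{x\leq 0}+C(1+x)^{-1}\,\ind_{x>0}$ is exactly the right fix, keeps $\psi\geq 0$ so that $\psi^2$ is still non-increasing (which is what Harris's inequality requires inside the proof of Theorem~\ref{thm.inegalite.deviation.transport}), and only changes $\epsilon_n$ by a negligible $C\,2^{-n}$ term coming from $\{M_n\leq 0\}$. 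This is a small gap in the paper's exposition that you have cleanly identified and closed.
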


\begin{rem}
\begin{enumerate}
\item Concerning Extreme theory, notice that this Theorem is only relevant if  $\mu$ belongs to the domain of attraction of the Gumbel distribution. Indeed,  the right tail of the Gumbel distribution behaves like $t\mapsto e^{-t}$ (whereas the left tail goes faster to $0$ with the following asymptotic : $t\mapsto e^{-e^t}$).\\
\item Proposition \ref{prop.deviation.gaussian} still holds if one substitute $\gamma_n$ with $d\mu(x)=e^{-V(x)}Z^{-1}dx$, where $V(x)=|x|^{\alpha}/\alpha,\, \alpha>1$ and the bound from \eqref{eq.poincare.poid.logconcave}  instead.\\
\item Similar results can be also be obtained if one replace the maximum by another order statistics.\\
\end{enumerate}
\end{rem}

\subsection{Variance of $l^p,p\geq 2$ norm of standard Gaussian vector}

As a further illustration of our approach, we propose to recover some variance's bounds of $l^p$-norms, $p\geq 1$, of a standard Gaussian vector, obtained in \cite{Paou}. The proof will be based on Proposition \ref{prop.poincare.poids.gaussien}. We will adopt the following notations : given a vector  $x=(x_1,\ldots,x_n)\in\R^n$ we denote by $\|x\|^p_p=\sum_{i=1}^n|x_i|^p$ its norm. \\

 In the article of  Paouris \textit{et al.} \cite{Paou}, the authors have noticed that the variance of $\|X\|_p$ is not precisely estimated by classical concentration theory. More precisely, classical tools from the theory of concentration of measure such as Poincar\'e's inequality or the isoperimetric Gaussian inequality yields the following bound

$$
{\rm Var}(\|X\|_p)\leq \max (n^{2/p-1},1),\quad p\geq 1.
$$

According to \cite{Paou}, this bound is only optimal when $1\leq p\leq 2$. The authors of \cite{Paou} improved this bound by using precise estimates of moments of Gaussian functionnals together with logarithmic Sobolev inequality (through the so-called Talagrand's inequality). More precisely, 

\begin{thm}[Paouris,Valettas, Zinn ]\label{prop.paouris}

Let $X$ be a standard Gaussian vector on $\R^n$ then
$$
{\rm Var}(\|X\|_p)\leq
\left\{
\begin{array}{ll}
C\frac{2^p}{p}n^{2/p-1},\quad 2<p\leq c\log n,\\
C/\log n,\quad p>c\log n,\\
\end{array}
\right.
$$
\noindent with $C,c>0$ some numerical constants which are independant of  $n$ and $p$.
\end{thm}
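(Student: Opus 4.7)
The plan is to apply Proposition \ref{prop.poincare.poids.gaussien} to (a smooth approximation of) $f(x)=\|x\|_p$ and then evaluate the resulting expectation by Gaussian concentration of the $\ell^p$-norm itself. Writing $\partial_i f(x)=\mathrm{sgn}(x_i)|x_i|^{p-1}/\|x\|_p^{p-1}$, the crucial algebraic observation for $p\geq 2$ is
\[
\frac{|x_i|^{2(p-1)}}{(1+|x_i|)^2}=|x_i|^{2(p-2)}\cdot\frac{|x_i|^2}{(1+|x_i|)^2}\leq |x_i|^{2(p-2)},
\]
so \eqref{eq.poincare.poids.gaussien} immediately gives
\[
{\rm Var}(\|X\|_p)\leq C\,\E\bigg[\frac{\|X\|_{2(p-2)}^{2(p-2)}}{\|X\|_p^{2(p-1)}}\bigg].
\]

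The two regimes then arise from comparing $\|X\|_{2(p-2)}$ with $\|X\|_p$. For $p\geq 4$ one has $2(p-2)\geq p$, so monotonicity of $\ell^q$-norms yields $\|X\|_{2(p-2)}\leq\|X\|_p$ and the bound collapses to $C\,\E[\|X\|_p^{-2}]$. For $2<p\leq 4$, H\"older's inequality applied with the counting measure gives $\|X\|_{2(p-2)}^{2(p-2)}\leq n^{(4-p)/p}\|X\|_p^{2(p-2)}$, so the bound becomes $Cn^{(4-p)/p}\E[\|X\|_p^{-2}]$. In both situations the problem is reduced to estimating the single quantity $\E[\|X\|_p^{-2}]$.

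For this last step, $\|\cdot\|_p$ is $1$-Lipschitz on $\R^n$ for $p\geq 2$ (indeed $\|\nabla\|x\|_p\|_2=\|x\|_{2(p-1)}^{p-1}/\|x\|_p^{p-1}\leq 1$), so by \eqref{eq.sudakov.tsirelson} the random variable $\|X\|_p$ concentrates sub-Gaussianly at scale one around its expectation. Combined with the exact Gaussian moment formula $\E|X_1|^p=2^{p/2}\Gamma((p+1)/2)/\sqrt{\pi}$, whose $p$-th root behaves like $\sqrt{p}$, one gets $\E\|X\|_p\asymp n^{1/p}\sqrt{p}$. Splitting according to the event $\{\|X\|_p\geq \tfrac12\E\|X\|_p\}$ then gives $\E[\|X\|_p^{-2}]\leq C/(p\,n^{2/p})$. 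Plugging back produces ${\rm Var}(\|X\|_p)\leq Cn^{2/p-1}/p$ across both subranges, which is even stronger than the stated bounds; in particular, once $p\asymp\log n$ we have $n^{2/p}=\Theta(1)$ and this degenerates to the Gumbel-type estimate $C/\log n$, matching the superconcentration bound for the maximum.

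The main obstacle is controlling the negative moment $\E[\|X\|_p^{-2}]$ on the small-norm event $\{\|X\|_p<\tfrac12\E\|X\|_p\}$: by Lipschitz concentration this event carries only the Gaussian mass $e^{-\Omega(p\,n^{2/p})}$, but $\|X\|_p^{-2}$ blows up on it, so one must trade a bit of tail decay for integrability, for instance by Cauchy--Schwarz together with the bound $\|X\|_p\geq\max_i|X_i|$ (whose joint density near the origin vanishes like $t^{n-1}$ for $n\geq 3$, ensuring $\E[\|X\|_p^{-4}]<\infty$). Once this integrability issue is dispatched, the entire argument is driven by the single weighted Poincar\'e inequality \eqref{eq.poincare.poids.gaussien}, bypassing the hypercontractive Talagrand inequality used in the original proof of \cite{Paou}.
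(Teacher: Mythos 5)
Your reduction via Proposition \ref{prop.poincare.poids.gaussien} and the pointwise bound $|x_i|^{2(p-1)}/(1+|x_i|)^2\leq |x_i|^{2(p-2)}$ is the same starting observation as the paper's proof of Proposition \ref{lp2}, which in fact only treats the regime $p>c\log n$. Where you diverge is in how the resulting quantity is controlled: the paper splits the integral over $B_\infty(0,\delta)$ and its complement and picks $\delta=\sqrt{2\log n}$, whereas you bound everything by $\E[\|X\|_p^{-2}]$ and handle that negative moment by Lipschitz concentration of $\|X\|_p$ around its mean $\asymp n^{1/p}\sqrt p$. For $p>c\log n$ your route works and is arguably cleaner: you get ${\rm Var}(\|X\|_p)\leq C/(p\,n^{2/p})\leq C/p\leq C/(c\log n)$. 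The integrability objection you raise can also be dispatched more directly than via the density of $\max_i|X_i|$: since $\|X\|_p\geq n^{-1/2}\|X\|_2$ for $p\geq 2$ and $\|X\|_2^2\sim\chi^2_n$, one has $\E[\|X\|_p^{-4}]\leq n^2\E[\|X\|_2^{-4}]=n^2/\big((n-2)(n-4)\big)\leq C$ for $n\geq 5$.

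There is, however, a genuine gap in the first regime $2<p\leq c\log n$, and your concluding claim of $Cn^{2/p-1}/p$ ``across both subranges'' is an algebra error. For $p\geq4$ the H\"older factor $n^{(4-p)/p}$ is absent, so plugging $\E[\|X\|_p^{-2}]\leq C/(p\,n^{2/p})$ into ${\rm Var}(\|X\|_p)\leq C\,\E[\|X\|_p^{-2}]$ yields $C\,n^{-2/p}/p$, not $Cn^{2/p-1}/p$; these coincide only at $p=4$, and $n^{-2/p}\gg n^{2/p-1}$ for $p>4$. The error is not cosmetic: the bound $Cn^{2/p-1}/p$ cannot even be correct in that range, because the Paouris--Valettas--Zinn variance is of order $2^p n^{2/p-1}/p$ there, which would force $2^p\lesssim 1$. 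Moreover the bound $Cn^{-2/p}/p$ that your argument actually produces does \emph{not} imply the stated $C\,2^p n^{2/p-1}/p$; e.g.\ at $p\asymp\sqrt{\log n}$ one has $n^{-2/p}/p\gg 2^p n^{2/p-1}/p$ since $\log n\gg(\log2+4)\sqrt{\log n}$. The culprit is the step $\|X\|_{2(p-2)}\leq\|X\|_p$ for $p\geq4$: this monotonicity bound is exactly what one needs for $p>c\log n$, but it is far too lossy for moderate $p$, where the decay of $\E\big[(\|X\|_{2(p-2)}/\|X\|_p)^{2(p-2)}\big]$ must be retained rather than replaced by $1$. Your proof therefore establishes the case $p>c\log n$ (and, incidentally, $2<p\leq4$ with an improved constant), but the range $4<p\leq c\log n$ of the first assertion remains open under your argument.
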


Here, we propose to recover Proposition \ref{prop.paouris}
 with Proposition \ref{prop.poincare.poids.gaussien}. We will only deal with the second assertion of the Proposition (the first part can be proved with similar arguments).

\begin{prop}\label{lp2}
For $n\geq N_0$, we have the following inequality
$$
{\rm Var}(\|X\|_p)\leq \frac{C}{\log n},\quad p> c\log n,
$$

\noindent with $C>0$ a numerical constant independant of $p$ and $n$.
\end{prop}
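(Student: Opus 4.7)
The plan is to apply Proposition \ref{prop.poincare.poids.gaussien} to (a smooth approximation of) $f(x)=\|x\|_p$. A direct computation gives $\partial_i f(x)=|x_i|^{p-1}\mathrm{sgn}(x_i)\,\|x\|_p^{1-p}$, so $(\partial_i f)^2=(|x_i|/\|x\|_p)^{2(p-1)}$. The crucial observation is that this ratio lies in $[0,1]$ and the exponent satisfies $2(p-1)\geq p$ for $p\geq 2$, hence
$$
(\partial_i f)^2(X)\leq \frac{|X_i|^p}{\|X\|_p^p}.
$$
Plugging this into \eqref{eq.poincare.poids.gaussien} turns the weighted Poincaré bound into a weighted average against probability weights $a_i:=|X_i|^p/\|X\|_p^p$ that, for large $p$, concentrate on the coordinate of maximal modulus.

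Next, let $M=\max_{i=1,\ldots,n}|X_i|$ and split the sum $\sum_i a_i(1+|X_i|)^{-2}$ according to whether $|X_i|\geq M/2$ or $|X_i|<M/2$. On the first set, $(1+|X_i|)^{-2}\leq 4(1+M)^{-2}$, and summing $a_i$ on this set is bounded by $1$. On the second set, $a_i\leq (|X_i|/M)^p\leq 2^{-p}$ and $(1+|X_i|)^{-2}\leq 1$, so the contribution is at most $n\,2^{-p}$. This gives
$$
\mathrm{Var}(\|X\|_p)\leq C\,\E\!\left[\frac{1}{(1+M)^2}\right]+C\,n\,2^{-p}.
$$

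To finish, I would estimate each term exactly as in the proof of \eqref{eq.poincare.poids.gaussien.maximum}. Since $M\geq M_n=\max_i X_i$ pointwise, the inclusion $\{M\leq\sqrt{\log n}\}\subset\{M_n\leq\sqrt{\log n}\}$ together with the Gaussian lower tail estimate $\p(X_1>t)\geq t(\sqrt{2\pi}(1+t^2))^{-1}e^{-t^2/2}$ yields $\E[(1+M)^{-2}]\leq C/\log n$. For the residual term, choosing the threshold $c>1/\log 2$ in the hypothesis $p>c\log n$ forces $n\,2^{-p}\leq n^{1-c\log 2}=o(1/\log n)$. Combining gives $\mathrm{Var}(\|X\|_p)\leq C/\log n$.

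The main obstacle is essentially bookkeeping: the inequality $2(p-1)\geq p$ is what allows the $l^p$ problem to be reduced to a maximum-type quantity, and after that the proof is a copy of the argument already carried out for $M_n$. The smooth approximation of $\|\cdot\|_p$ near the axes is routine (one can regularize $|t|^p$ as $(\varepsilon^2+t^2)^{p/2}$ and pass to the limit in the variance bound), and I would relegate it to a one-line remark.
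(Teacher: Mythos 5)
Your proof is correct, and it takes a genuinely different route from the paper's. The paper decomposes the \emph{integration domain} into $B_\infty(0,\delta)$ and its complement with $\delta=\sqrt{2\log n}$, killing the $2(p-1)$ exponent via the elementary norm comparisons $\|x\|_{2(p-1)}\leq\|x\|_p$ and $\|x\|_{2(p-2)}\leq\|x\|_p$ (which tacitly requires $p>4$); the weight $(1+|x_i|)^{-2}$ then drops out of the inner term and produces $1/\delta^2$ on the outer term. You instead first use $|x_i|/\|x\|_p\leq 1$ to reduce $2(p-1)$ down to $p$, turning the weights $a_i=|X_i|^p/\|X\|_p^p$ into a probability vector, and then decompose the \emph{index set} by whether $|X_i|\geq M/2$ or not. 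Both arguments reduce the problem to the same two ingredients already present in the proof of \eqref{eq.poincare.poids.gaussien.maximum}: $\E\!\left[(1+\max_i|X_i|)^{-2}\right]\lesssim 1/\log n$ via the Gaussian lower-tail estimate, plus a residual term that must be $o(1/\log n)$ once $p$ is pushed past a multiple of $\log n$. If anything, your argument is cleaner: it isolates the probability-weight structure, and your estimate of $\p(M_n\leq\sqrt{\log n})\leq e^{-c\sqrt{n/\log n}}$ is actually sharper than the paper's stated bound $(1-e^{-\delta^2/3})^n\sim e^{-n^{1/3}}$ (which, as written, uses an incorrect Gaussian tail lower bound). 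One small remark: your choice of threshold $M/2$ forces $c>1/\log 2$, which is not a priori the same $c$ appearing in Theorem~\ref{prop.paouris}; replacing $M/2$ by $\theta M$ for a small fixed $\theta\in(0,1)$ gives the contribution $n\theta^p$ from the small indices and $\theta^{-2}(1+M)^{-2}$ from the large ones, so the argument actually covers $p>c\log n$ for any prescribed $c>0$ at the cost of a larger numerical constant $C$.
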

\begin{proof}
 Let $\delta>0$ be a parameter to be choosen later and denote by $B_{\infty}(0,\delta)~=~\{x\in\R^n,\, \|x\|_{\infty}< \delta\}$. Thus,
 
\begin{eqnarray*}
{\rm Var}(\|X\|_p)&\leq& C\sum_{i=1}^n\bigg(\int_{B_\infty(0,\delta)}\frac{|x_i|^{2(p-1)}}{1+|x_i|^2}\frac{1}{\|x\|_p^{2(p-1)}}d\gamma_n(x)+\int_{B^c_\infty(0,\delta)}\frac{|x_i|^{2(p-1)}}{1+|x_i|^2}\frac{1}{\|x\|_p^{2(p-1)}}d\gamma_n(x)\bigg)\\
&=&C\bigg(\sum_{i=1}^n\mathcal{I}_i+\mathcal{J}_i\bigg)
\end{eqnarray*}

We recall the following relations between $l^p$ and $l^q$ norms, for $p<q$, which will be freely used in the sequel,

$$
\|x\|_q\leq\|x\|_p\leq n^{1/p-1/q}\|x\|_q,\quad \forall x\in \R^n
$$

\noindent On one hand, since $p<2(p-1)$, 
\begin{eqnarray*}
\sum_{i=1}^n\mathcal{I}_i&\leq& \int_{B_\infty(0,\delta)}\frac{\|x\|_{2(p-1)}^{2(p-1)}}{\|x\|_{p}^{2(p-1)}}d\gamma_n(x)\\
&\leq& \p(X\in B_\infty(0,\delta))
\end{eqnarray*}

\noindent On the other hand, since $p<2(p-2)$,
 
\begin{eqnarray*}
\sum_{i=1}^n\mathcal{J}_i&\leq& \int_{B^c_\infty(0,\delta)}\frac{\|x\|_{2(p-2)}^{2(p-2)}}{\|x\|_{p}^{2(p-1)}}d\gamma_n(x)=\int_{B^c_\infty(0,\delta)}\bigg(\frac{\|x\|_{2(p-2)}}{\|x\|_{p}}\bigg)^{2(p-2)}\frac{1}{\|x\|_p^2}d\gamma_n(x)\\
&\leq &\int_{B^c_\infty(0,\delta)}\frac{d\gamma_n(x)}{\|x\|^2_p}\\
&\leq &\frac{1}{\delta^2}\p(X\in B^c_\infty(0,\delta))
\end{eqnarray*}

\noindent Furthermore, notice that the following upper bound is satisfied 
 
\begin{eqnarray*}
\p(X\in B^c_\infty(0,\delta))=\p(\max_{i=1,\ldots,n} |X_i|\geq \delta)&=&\p(\exists j\in\{1,\ldots,n\},\, |X_j|\geq \delta)\\
&\leq& n\p(|X_1|\geq \delta)\leq 2ne^{-\delta^2/2}.
\end{eqnarray*}

\noindent So far we have obtained, 
$$
{\rm Var}(\|X\|_p)\leq C\bigg(\bigg[1-\frac{\delta}{1+\delta^2}e^{-\delta^2/2}\bigg]^n+\frac{2ne^{-\delta^2/2}}{\delta^2}\Bigg)
$$

\noindent 


\noindent Then, choose $\delta=\sqrt{2\log n}$ (with $n$ large enough) to conclude. Indeed, we have

$$
\p\big(X\in B_\infty(0,\delta)\big)\leq (1-e^{-\delta^2/3})^n \sim e^{-n^{1/3}}
$$

\noindent together with

$$
\frac{2ne^{-\delta^2/2}}{\delta^2}= \frac{1}{\log n}.
$$

\noindent In other terms

$$
{\rm Var}(\|X\|_p)\leq C\bigg(o\big(\frac{1}{\log n}\big)+\frac{1}{\log n}\bigg)\leq \frac{C}{\log n},
$$

\noindent which is the result.
\end{proof}

\subsection{Coulomb gazes}
This section exposes another application of our main results in another mathematical area. We want to highlight that, in this section, the factors $\mu_i,\,i=1,\ldots,n$ (from the product measure $\mu_1\otimes\ldots\mu_n$) will not assumed to be identical. This difference justifies the separation of this section from the others.\\
\newline
Now, let us introduce few notions about Coulomb gazes and the results obtained by Chafa\"i and P\'ech\'e in \cite{ChaPe}. Let us consider a gas of charged particules $\{z_1,\ldots,z_n\}$ on the complex plane $\mathbb{C}$, confined individually by the external field $Q$ and experiencing a Coulomb pair repulsive interaction. This corresponds to the probability distribution $\mathbb{C}^n$ with density proportional to 

\begin{equation}\label{eq.coulomb.gas}
(z_1,\ldots,z_n)\in\mathbb{C}^n\mapsto \prod_{j=1}^ne^{-nQ(z_j)}\prod_{1\leq j<l\leq n}|z_j-z_k|^\beta 
\end{equation}

\noindent with $\beta>0$ is a fixed parameter and where $Q$ is a fixed smooth function.

We will focus on the particular case where $\beta=2$ and $Q(z)=V(|z|)$ with $V(t)=t^\alpha,\, t\geq0,\,\alpha\geq 1$. We are interested in the study of 

\begin{equation}\label{eq.order.stat.gas}
|z|_{(1)}\geq \ldots\geq |z|_{(n)}
\end{equation}

\noindent the order statistics of the moduli of the Coulomb gas. Notice that $|z|_{(1)}~=~\max_{1\leq k\leq n}|z_k|$.\\

In their article, the authors proved the following representation formula

\begin{thm}[Chafa\"i-P\'ech\'e]
For $\beta=2$ and under the preceding assumptions, we have the following equality in distribution

$$
\big(|z|_{(1)},\ldots,|z|_{(n)}\big)=\big(R_{(1)},\ldots, R_{(n)}\big)
$$

\noindent with $R_{(1)}\geq\ldots\geq R_{(n)}$ the order statistics associated to independent random variables $R_1,\ldots,R_n$ where $R_k$, for $k=1,\ldots,n$, has a density proportional to 

$$
t\mapsto t^{2k-1}e^{-nV(t)}1_{t\geq 0}.
$$

\end{thm}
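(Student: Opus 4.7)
The plan is to exploit the rotational invariance of the model together with the classical Vandermonde--determinant identity for the pair interaction when $\beta=2$. Writing each particle in polar coordinates $z_j = r_j e^{i\theta_j}$, the Lebesgue measure on $\mathbb{C}^n$ becomes $\prod_j r_j\, dr_j\, d\theta_j$, and because $Q(z_j)=V(r_j)$ depends only on the modulus, the density from \eqref{eq.coulomb.gas} factorizes as
\[
\frac{1}{Z}\prod_{j=1}^n r_j e^{-nV(r_j)}\prod_{1\leq j<l\leq n}|z_j-z_l|^2.
\]
The first step is then to use the Vandermonde identity $\prod_{j<l}(z_l-z_j)=\det(z_j^{l-1})_{1\leq j,l\leq n}$ to rewrite the interaction term as $|\det(z_j^{l-1})|^2$.

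The second step is to expand the squared determinant via the Leibniz formula:
\[
\prod_{j<l}|z_j-z_l|^2=\sum_{\sigma,\tau\in S_n}\mathrm{sgn}(\sigma\tau)\prod_{j=1}^n z_j^{\sigma(j)-1}\overline{z_j}^{\,\tau(j)-1}=\sum_{\sigma,\tau}\mathrm{sgn}(\sigma\tau)\prod_j r_j^{\sigma(j)+\tau(j)-2}e^{i\theta_j(\sigma(j)-\tau(j))}.
\]
Integrating each angular variable $\theta_j$ over $[0,2\pi]$ kills every term except those with $\sigma=\tau$, leaving
\[
\int_{[0,2\pi]^n}\prod_{j<l}|z_j-z_l|^2\,d\theta_1\cdots d\theta_n=(2\pi)^n\,\mathrm{perm}\bigl(r_j^{2(l-1)}\bigr)_{1\leq j,l\leq n}.
\]
The third step is therefore to obtain the joint density of the unordered tuple $(|z_1|,\ldots,|z_n|)$ as
\[
\frac{1}{Z'}\,\mathrm{perm}\bigl(r_j^{2(l-1)}\bigr)\prod_{j=1}^n r_j e^{-nV(r_j)}=\frac{1}{Z'}\sum_{\sigma\in S_n}\prod_{j=1}^n r_j^{2\sigma(j)-1}e^{-nV(r_j)}.
\]

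The final step is to recognize this symmetrized sum as the joint density of the unordered values of $n$ \emph{independent} random variables $R_1,\ldots,R_n$, where $R_k$ has density proportional to $t\mapsto t^{2k-1}e^{-nV(t)}\mathbf{1}_{t\geq 0}$. Indeed, if $Z_k=\int_0^\infty t^{2k-1}e^{-nV(t)}dt$ denotes the normalizing constant of $R_k$, then the joint density of the independent tuple $(R_1,\ldots,R_n)$ is $\prod_k Z_k^{-1}r_k^{2k-1}e^{-nV(r_k)}$, and the density of its unordered version is obtained by averaging over $S_n$, which yields precisely the expression above (with $Z'=n!\prod_k Z_k$). Since two random tuples with identical \emph{unordered} distributions have identical order statistics, the conclusion $(|z|_{(1)},\ldots,|z|_{(n)})\stackrel{d}{=}(R_{(1)},\ldots,R_{(n)})$ follows.

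The only genuinely delicate point is the angular integration step: one must verify carefully that only the diagonal $\sigma=\tau$ survives, which requires the parameter $\beta=2$ (otherwise the modulus squared would not produce the clean factor $e^{i\theta_j(\sigma(j)-\tau(j))}$) and the rotational invariance $Q(z)=V(|z|)$ (otherwise $e^{-nQ}$ would couple radii and angles). All remaining manipulations are bookkeeping with normalization constants.
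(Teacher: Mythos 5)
The paper does not prove this theorem; it is quoted from Chafa\"i--P\'ech\'e \cite{ChaPe} (and from Rider \cite{Rid} for $V(r)=r^2$), so your argument can only be judged on its own terms. It is correct, and it is in fact the classical argument (often attributed to Kostlan for the Ginibre case): expand the squared Vandermonde determinant by Leibniz, use the angular orthogonality $\int_0^{2\pi}e^{i\theta(\sigma(j)-\tau(j))}\,d\theta=2\pi\,\delta_{\sigma(j),\tau(j)}$ to kill the off-diagonal terms and collapse the determinant to a permanent, and then read the resulting symmetric density $\sum_{\sigma}\prod_j r_j^{2\sigma(j)-1}e^{-nV(r_j)}$ as the symmetrization of the product density of independent $R_k$'s. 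Your reindexing and normalization bookkeeping are right (the products $\prod_k Z_{\pi(k)}^{-1}=\prod_k Z_k^{-1}$ are permutation-invariant, so the shuffled independent tuple and the radial marginal of the gas have the same density), and the closing observation that equal unordered laws imply equal order-statistic laws is the correct way to finish. One minor clarification worth adding: the only hypothesis actually needed for this representation is that all the normalizing constants $Z_k=\int_0^\infty t^{2k-1}e^{-nV(t)}\,dt$ be finite, which holds here since $V(t)=t^\alpha$ with $\alpha\geq 1$; the additional convexity and decay assumptions mentioned in the remark after the theorem are used by Chafa\"i--P\'ech\'e for the subsequent edge-universality asymptotics, not for this distributional identity.
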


\begin{rem}
More precisely, the case $\beta=2$ and $V(r)=r^2$ has been proved by Rider in \cite{Rid}. Chafa\"i and P\'eche extended Rider's results when $\beta=2$ and $V$ statisfies some convexity assumption together with some decay conditions at infinity. 
\end{rem}

In \cite{ChaPe}, based on the representation formula, the authors also proved an asymptotic results for $|z|_{(1)}$. This the content of next Theorem

\begin{thm}[Chafa\"i-P\'ech\'e]\label{thm.coulomb.gumbel}
Let $|z|_{(1)}=\max_{1\leq k\leq n}|z_k|$ be as in \eqref{eq.order.stat.gas}, with $\beta=2$. Suppose that $V(t)=t^\alpha$, for $t\geq 0$ and for some $\alpha\geq 1$. Set $c_n=\log n-2\log\log n-\log 2\pi$ and 

$$
a_n=2\big(\frac{\alpha}{2}\big)^{1/\alpha+1/2}\sqrt{nc_n} \quad b_n=\big(\frac{2}{\alpha}\big)^{1/\alpha}\bigg(1+\frac{1}{2}\sqrt{\frac{2c_n}{\alpha n}}\bigg).
$$

Then $\big(a_n(|z|_{(1)}-b_n)\big)_{n\geq1}$ converge in distribution, as $n\to\infty$, toward the standard Gumbel law.
\end{thm}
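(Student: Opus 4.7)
The plan is to exploit the Chafa\"i--P\'ech\'e representation theorem to reduce the problem to the maximum of independent (non-identically distributed) random variables, and then carry out a tail analysis for the incomplete Gamma function. By the representation, $|z|_{(1)} \stackrel{d}{=} R_{(1)} = \max_{1 \le k \le n} R_k$, where $R_1,\ldots,R_n$ are independent and $R_k$ has density proportional to $t^{2k-1}e^{-nt^\alpha}\ind_{t \ge 0}$. The change of variables $U_k := nR_k^\alpha$ identifies $U_k$ with a $\mathrm{Gamma}(2k/\alpha, 1)$ random variable, whence by independence
\[
\p\bigl(R_{(1)} \le s\bigr) \;=\; \prod_{k=1}^n \p\bigl(U_k \le ns^\alpha\bigr) \;=\; \prod_{k=1}^n P\bigl(2k/\alpha,\, ns^\alpha\bigr),
\]
where $P(a,x) = \gamma(a,x)/\Gamma(a)$ is the regularized lower incomplete Gamma function. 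The problem is thus reduced to the asymptotic analysis of this product at $s = b_n + x/a_n$.

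Setting $s_n := b_n + x/a_n$ and $y_n := n s_n^\alpha$, a routine Taylor expansion of the explicit formulas for $a_n$ and $b_n$ yields
\[
y_n \;=\; \frac{2n}{\alpha} \;+\; \sqrt{\frac{2nc_n}{\alpha}} \;+\; x \sqrt{\frac{2n}{\alpha c_n}} \;+\; O(\log n);
\]
the prefactor $(2/\alpha)^{1/\alpha}$ in $b_n$ is chosen so that the leading term is $2n/\alpha = \E[U_n]$, the correction $\tfrac12\sqrt{2c_n/(\alpha n)}$ produces a shift of order $\sqrt{c_n}$ standard deviations of $U_n$, and $a_n$ is tuned so that the $x$-dependent term emerges at the next order. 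Taking logarithms and using $\log(1-u)=-u+O(u^2)$ for small $u$, proving $\p(a_n(R_{(1)}-b_n)\le x) \to e^{-e^{-x}}$ is equivalent to showing
\[
\sum_{k=1}^n Q\bigl(2k/\alpha,\, y_n\bigr) \;\longrightarrow\; e^{-x}, \qquad Q(a,x) := 1 - P(a,x).
\]

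The analytic heart of the argument is the evaluation of this last sum. Writing $k = n-j$ and setting $t_k := (y_n - 2k/\alpha)/\sqrt{2k/\alpha}$, the expansion above gives $t_k = \sqrt{c_n} + x/\sqrt{c_n} + j\sqrt{2/(\alpha n)} + o(1)$, which is large for every relevant $k$, placing us firmly in the right-tail regime of a Gamma of large shape. In this regime the uniform Temme / Mills-ratio expansion
\[
Q\bigl(2k/\alpha,\, y_n\bigr) \;=\; \frac{\phi(t_k)}{t_k}\bigl(1+o(1)\bigr), \qquad \phi(t) = \tfrac{1}{\sqrt{2\pi}} e^{-t^2/2},
\]
applies uniformly for $j = o(\sqrt{n/c_n}\,)$, beyond which the contribution decays super-exponentially and is negligible. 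Using the identity $e^{-c_n/2} = \sqrt{2\pi}\,\log n/\sqrt n$, the sum becomes a geometric series in $j$ with common ratio $e^{-\sqrt{2c_n/(\alpha n)}}$; the particular form $c_n = \log n - 2\log\log n - \log 2\pi$ is designed precisely to cancel the Stirling and Mills-ratio prefactors, leaving the clean limit $e^{-x}$. The main obstacle is the uniform control of $Q(a, y_n)$ across the transition regime where $a$ and $y_n$ are both large and comparable, and verifying that the constants in $a_n, b_n, c_n$ combine to produce exactly $e^{-x}$ in the limit; once this analytic input is in place, $\prod_k(1-Q_k) \sim \exp(-\sum_k Q_k) \to \exp(-e^{-x})$ delivers the Gumbel convergence.
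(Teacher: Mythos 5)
This theorem is not proved in the paper under review; it is quoted verbatim from Chafa\"i--P\'ech\'e \cite{ChaPe}, where the proof appears. There is therefore no ``paper's own proof'' to compare against. Your strategy --- reduce via the representation formula to a maximum of independent $\mathrm{Gamma}(2k/\alpha,1)$ variables, write $\p(R_{(1)}\le s)=\prod_k P(2k/\alpha,ns^\alpha)$, pass to $\sum_k Q(2k/\alpha,y_n)\to e^{-x}$, and control each $Q$ by a uniform (Temme / moderate-deviation) tail expansion in the regime $t_k=(y_n-2k/\alpha)/\sqrt{2k/\alpha}\sim\sqrt{c_n}=o\bigl((2k/\alpha)^{1/6}\bigr)$ --- is exactly the route taken by Chafa\"i and P\'ech\'e (following Rider for $\alpha=2$), and it is sound.

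One concrete warning: if you actually run the geometric-series computation with the constants exactly as written in this statement, the sum does not close to $e^{-x}$ unless $\alpha=2$. Indeed with $t_k\approx\sqrt{c_n}+x/\sqrt{c_n}+j\sqrt{2/(\alpha n)}$ (where $k=n-j$) one gets
\[
\sum_{j\ge 0}\frac{\phi(t_k)}{t_k}\;\approx\;\frac{e^{-c_n/2}}{\sqrt{2\pi c_n}}\,e^{-x}\sum_{j\ge 0}e^{-j\sqrt{2c_n/(\alpha n)}}\;\approx\;\frac{\log n}{c_n}\sqrt{\tfrac{\alpha}{2}}\,e^{-x}\;\longrightarrow\;\sqrt{\tfrac{\alpha}{2}}\,e^{-x},
\]
using $e^{-c_n/2}=\sqrt{2\pi}\log n/\sqrt n$. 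For $\alpha\ne 2$ this is off by the constant $\sqrt{\alpha/2}$, which indicates that the quantity $c_n$ should carry an $\alpha$-dependent correction (e.g.\ $c_n=\log n-2\log\log n-\log 2\pi+\log(\alpha/2)$) to obtain the \emph{standard} Gumbel limit; the version quoted here appears to have dropped that term in transcription. This does not invalidate your approach --- it is precisely the kind of bookkeeping you flagged as ``the main obstacle'' --- but it means the verification step is not merely routine and must be done against the original reference rather than against the constants as printed in this note.

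Finally, two small technical points worth making explicit when you fill in the sketch: (i) the shapes $2k/\alpha$ are generally non-integer, so one needs the uniform incomplete-Gamma asymptotics (Temme) rather than a Cram\'er theorem for sums of i.i.d.\ exponentials; and (ii) the cut-off $j=O(\sqrt{n/c_n}\,)$ beyond which terms are negligible should be justified by a one-sided exponential bound on $Q(a,y)$ that is uniform in $a\le 2n/\alpha$, not only by the leading-order Mills-ratio term.
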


We will see that it is not difficult to get a non-asymptotic upper bound on the variance of  $|z_{(1)}|$,  together with a deviation inequality for our main results. A crucial step is the representation formula \eqref{eq.order.stat.gas} of $|z_{(1)}|$  : 

$$
|z_{(1)}|=\max_{i=1,\ldots,n}R_i\quad \text{in law}
$$

\noindent where $R_1,\ldots, R_n$ are independent random variables and $R_k$, for any $k=1,\ldots,n$, has a density proportionnal to 

$$
t\mapsto t^{2k-1}e^{-n t^{\alpha}}1_{[0,\infty)}(t),\,\alpha\geq 1.
$$

\noindent Then, it is possible to transport the standard Exponential measure on $\R^n_+$ toward the measure $\mu_1\otimes\cdots\otimes\mu_n$ with $\mu_k=\mathcal{L}(R_k)$ for any $k=1,\ldots,n$. Notice then, for every $k=1,\ldots,n$, that $\mu_k$ is log-concave on  $\R_{+}$ with potential  
\[
V_k(x)=n t^{\alpha} -(2k-1)\log t.
\]
\noindent So it is not difficult to prove (thanks to the estimates from \cite{Ane}) that

$$
\frac{1}{\kappa_{\mu_k}(x)}\leq\frac{C_\alpha}{nx^{\alpha-1}+1},\,x>0
$$
 
\noindent with $C_\alpha>0$ a numerical constant. Thus, Proposition  \ref{exp} yields

\begin{eqnarray*}
{\rm Var}(|z_{(1)}|)\leq \frac{C_\alpha}{n^2}\E\bigg[\frac{1}{|z_{(1)}|^{2(\alpha-1)}}\bigg] &\leq& \frac{C_\alpha}{n\log n}+C_\alpha\p\bigg(|z_{(1)}|\leq \bigg(\frac{\log n}{n}\bigg)^{1/2(\alpha-1)}\bigg)\\
&\leq & \frac{C_\alpha}{n\log n}+\prod_{i=1}^n\bigg[1-\p\bigg(R_i\geq \frac{\log n}{n}\bigg)^{1/2(\alpha-1)}\bigg)\bigg]\\
&\leq &\frac{C_\alpha}{n\log n}+o\bigg(\frac{1}{n\log n}\bigg)\\
&\leq& \frac{C_\alpha}{n\log n}
\end{eqnarray*}

\noindent Also, Theorem \ref{thm.inegalite.deviation.transport} immediatly gives the following deviation inequality 

$$
\p\bigg(\sqrt{n\log n}\big(|z_{(1)}|-\E[|z_{(1)}|\big)\geq t\bigg)\leq 6e^{-C_\alpha t},\,t\geq 0
$$
 
 \noindent where $C_\alpha>0$ is a numerical constant that does not depend on $n$. In other words, we have obtained a non asymptotic deviation inequality together with a variance bounds which are in accordance with Theorem \ref{thm.coulomb.gumbel}. That is to say, we have proven
 
 \begin{prop}
Let $\{z_1,\ldots,z_n\}$ be a Coulomb gazes with density proportional to

$$
(z_1,\ldots,z_n)\mapsto\prod_{j=1}^ne^{-nQ(z_j)}\prod_{1\leq j<k\leq n}|z_j-z_k|^2,
$$

\noindent with $Q=V(|z|)$ and $V(t)=t^{\alpha},\,\alpha\geq 1$. Then, for any $n>1$, the following holds

$$
{\rm Var}(|z_{(1)}|)\leq\frac{C_\alpha}{n\log n},
$$

\noindent with $C_\alpha>0$ a numerical constant, independent of $n$,
\noindent and

$$
\p\bigg(\sqrt{n\log n}\big(|z_{(1)}|-\E[|z_{(1)}|\big)\geq t\bigg))\leq 3e^{-C_\alpha t},\,t\geq 0,
$$

\noindent with $C_\alpha>0$ a numerical constant independent of $n$.

 \end{prop}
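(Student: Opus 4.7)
The plan is to reduce the Coulomb gas problem to an order statistic of independent random variables via the Chafaï–Péché representation, then apply Corollary \ref{exp} (the exponential transport bound) and Theorem \ref{thm.inegalite.deviation.transport}, exactly in the spirit of the Extreme Theory applications already worked out. The key conceptual point is that after conditioning on the representation, $|z|_{(1)}$ becomes the maximum of $n$ independent, log-concave, non-identically distributed random variables on $\R_+$, and the machinery of the paper is flexible enough to handle non-identical factors because the transport $T$ is coordinate-wise.

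First, I would invoke the representation $|z|_{(1)}\stackrel{d}{=}\max_{1\leq k\leq n} R_k$ where the $R_k$ are independent with $R_k$ having density proportional to $t^{2k-1}e^{-nt^\alpha}\mathbf{1}_{t\geq 0}$. Each $\mu_k=\mathcal{L}(R_k)$ is log-concave on $\R_+$ with potential $V_k(t)=nt^\alpha-(2k-1)\log t$. I would then transport the standard exponential measure on $\R_+^n$ onto $\mu_1\otimes\cdots\otimes\mu_n$ component-wise; the argument of Theorem \ref{thm.tanguy.transport1} carries over verbatim since only factorization of the product measures and one-dimensional Poincaré of each factor of $\nu^n$ are used, not identical distribution.

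Next, I would estimate the hazard function. Since $V_k'(t)=n\alpha t^{\alpha-1}-(2k-1)/t$ grows like $nt^{\alpha-1}$ for $t$ away from zero, the standard Laplace-type estimate for log-concave tails (as used in the proof of Lemma \ref{Log}, citing \cite{Ane}) yields
\[
\frac{1}{\kappa_{\mu_k}(x)}\leq\frac{C_\alpha}{nx^{\alpha-1}+1},\qquad x>0,
\]
uniformly in $k$. Plugging this into the version of Corollary \ref{exp} adapted to the non-identical product $\mu_1\otimes\cdots\otimes\mu_n$, and using that the partial derivatives of (a smooth approximation of) the maximum form a partition of unity concentrated on $\{R_k=\max_j R_j\}$, I obtain
\[
{\rm Var}(|z|_{(1)})\leq \frac{C_\alpha}{n^2}\,\E\!\left[\frac{1}{|z|_{(1)}^{2(\alpha-1)}}\right].
\]

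Finally, I would split this expectation at the natural scale $r_n=(\log n/n)^{1/\alpha}$ (the scale dictated by Theorem \ref{thm.coulomb.gumbel}): on $\{|z|_{(1)}\geq r_n\}$ the integrand is bounded by $(n/\log n)^{(\alpha-1)/\alpha}/n^{(\alpha-1)/\alpha}$ up to constants, giving a $C_\alpha/(n\log n)$ contribution; on $\{|z|_{(1)}<r_n\}$ I would use independence to write $\p(|z|_{(1)}<r_n)=\prod_{k=1}^n \p(R_k<r_n)$ and a quantitative lower tail estimate for $R_k$ at $r_n$ (which gives a term $o(1/(n\log n))$). For the deviation inequality, the monotone bound $\psi(x)=C_\alpha/(nx^{\alpha-1}+1)$ is non-increasing, and the same truncation argument bounds $\E[\psi(|z|_{(1)})^2]\leq C_\alpha/(n\log n)$, so Theorem \ref{thm.inegalite.deviation.transport} yields the exponential deviation with the claimed $\sqrt{n\log n}$ speed.

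The main obstacle I anticipate is the lower-tail control, i.e.\ showing $\prod_{k=1}^n \p(R_k\geq r_n)$ is bounded below well enough that $\p(|z|_{(1)}<r_n)=o(1/(n\log n))$. Unlike the i.i.d.\ Gaussian case, one must track the $k$-dependence in the density $t^{2k-1}e^{-nt^\alpha}$ and verify that even the smallest-indexed $R_k$ (which is most concentrated near zero) still places sufficient mass above $r_n$; this needs an explicit computation using the normalizing constants of $\mu_k$ rather than the generic estimate, but once carried out it collapses to the bound the author records.
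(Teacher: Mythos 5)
Your plan reproduces the paper's skeleton exactly: the Chafa\"i--P\'ech\'e representation, coordinate-wise transport of the standard Exponential onto the non-identical product $\mu_1\otimes\cdots\otimes\mu_n$ (which indeed goes through because the Poincar\'e constant of $\nu^n$ does not depend on the targets), the hazard estimate $1/\kappa_{\mu_k}(x)\le C_\alpha/(nx^{\alpha-1}+1)$, the partition-of-unity step yielding
\[
{\rm Var}(|z_{(1)}|)\le \frac{C_\alpha}{n^2}\,\E\!\left[\frac{1}{|z_{(1)}|^{2(\alpha-1)}}\right],
\]
and Theorem \ref{thm.inegalite.deviation.transport} with $\psi(x)=C_\alpha/(nx^{\alpha-1}+1)$ for the deviation inequality. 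All of this is what the paper does.

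The one concrete gap is the truncation level. You split at $r_n=(\log n/n)^{1/\alpha}$, presented as the scale dictated by Theorem \ref{thm.coulomb.gumbel}; but there $b_n$ is of order one, so $(\log n/n)^{1/\alpha}$ is not the typical scale of $|z_{(1)}|$, and in any case the splitting level must be calibrated to the exponent $2(\alpha-1)$ in the integrand, not to the Gumbel centering. With your choice the main term is
\[
\frac{C_\alpha}{n^2\, r_n^{2(\alpha-1)}} = \frac{C_\alpha}{n^2}\left(\frac{n}{\log n}\right)^{2(\alpha-1)/\alpha},
\]
which equals $C_\alpha/(n\log n)$ only when $2(\alpha-1)/\alpha=1$, i.e.\ $\alpha=2$. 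For $\alpha>2$ this quantity is strictly larger than $C_\alpha/(n\log n)$, so the claimed bound is not reached. The paper instead truncates at $r_n=(\log n/n)^{1/(2(\alpha-1))}$, chosen so that $r_n^{2(\alpha-1)}=\log n/n$, hence the main term is $C_\alpha/(n\log n)$ uniformly in $\alpha>1$; the lower-tail term $\p(|z_{(1)}|<r_n)=o(1/(n\log n))$ and the deviation inequality then proceed exactly as you anticipate.
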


\section{Remarks and comparison with existing literature}
In this section, we will briefly explain how stronger functional inequalities can be used to reach the right asymptotic of the left tail in the Gumbel's domain of attraction. Then, we will compare our main results with the existing literature. 
\subsection{Few words on isoperimetric inequalities}
As we have already seen, the transport of the Exponential measure (toward a measure $\mu^n$) permit to improve some concentration's properties of the measure $\mu^n$. This phenomenon as already been observed by Talagrand in \cite{Tal3}. He used the isoperimetric inequality (involving a mixture of $l^1$ and $l^2$ balls) satisfied by the (symmetric) Exponential measure $\mu^n$ to improve the isoperimetric inequality satisfied by the standard Gaussian measure. More precisely, such improvement can be seen on the following concentration inequality

   \begin{equation}\label{eq.talagrand}
 \p\bigg(\bigg|\max_{i=1,\ldots,n}|X_i|-\sqrt{\log n}\bigg|\geq C\frac{t}{\sqrt{\log n}}\bigg)\leq Ce^{-ct},\quad t\geq 0
 \end{equation}

\begin{rem}
\begin{enumerate}
\item This type of inequality recently appeared in \cite{KT} for more general Gaussian measure.
\item This gives the correct asymptotic behaviour (with respect to Extreme Theory) of the right tail of the maximum. However, the asymptotic behaviour of the left tail, in \eqref{eq.talagrand}, is still sub-obtimal.
\end{enumerate}
\end{rem}

The symmetry of the (two sided) Exponential measure on $\R$, through Talagrand's isoperimetric inequality, seems to not make any distinctions between the left tail from the right and only gives a exponential decay. In \cite{Bob1}, Bobkov studied a different isoperimetric problem (with the standard Exponential measure and uniform enlargements $B_\infty$ instead). The lack of symmetry of the (standard) Exponential measure can be used to achieve the correct decay of the left tail on the maximum (in the Gumbel's domain of attraction).\\

More precisely, Bobkov proved the following Theorem. 

\begin{thm}[Bobkov]
Let $\nu^n$ stands for the (standard) Exponential measure on $\R_+$. Then, for every non empty ideal $A\subset \R_+^n$ such that $\nu^n(A)=\nu^n(B_\infty)=$ and every $r\geq 0$, the following inequality holds :

$$
\nu^n(A+rB_\infty)\geq \nu^n(B+rB_\infty),
$$

\noindent in other words,

$$
\nu^n(A+rB_\infty)\geq \bigg[e^{-r}\big[\nu^n(A)\big]^{1/n}+(1-e^{-r})\bigg]^n.
$$
\end{thm}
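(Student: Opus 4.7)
The first step is to reformulate the enlargement analytically. Since $A$ is downward-closed, one checks that $x \in A + rB_\infty$ if and only if $T_r(x) := ((x_1-r)_+, \ldots, (x_n-r)_+) \in A$; consequently
$$\nu^n(A + rB_\infty) = \nu^n\bigl(T_r^{-1}(A)\bigr) = \mu_r^{\otimes n}(A),$$
where $\mu_r := (1-e^{-r})\delta_0 + e^{-r}\nu$ is the push-forward of $\nu$ by $x \mapsto (x-r)_+$. Since $\mu_r([0,a]) = 1 - e^{-(a+r)}$, the target inequality amounts to the claim that among ideals of prescribed $\nu^n$-measure, the box $[0, a]^n$ minimises $\mu_r^{\otimes n}$.

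\textbf{Induction on $n$.} The base case $n = 1$ is immediate since every non-empty ideal of $\R_+$ is an interval $[0, a]$. For $n \geq 2$, slice $A$ along the last coordinate: $A^s := \{y \in \R_+^{n-1} : (y, s) \in A\}$ is an ideal in $\R_+^{n-1}$, and $s \mapsto \nu^{n-1}(A^s)$ is non-increasing. Writing $\alpha := e^{-r}$, $\beta := 1 - e^{-r}$, Fubini together with $\mu_r = \beta\delta_0 + \alpha\nu$ gives
$$\mu_r^{\otimes n}(A) = \beta\,\mu_r^{\otimes(n-1)}(A^0) + \alpha\int_0^\infty \mu_r^{\otimes(n-1)}(A^s)\,e^{-s}\,ds.$$
Applying the inductive hypothesis to each section, with $h(s) := \nu^{n-1}(A^s)^{1/(n-1)}$ (non-increasing in $s$), one obtains
$$\mu_r^{\otimes n}(A) \;\geq\; \beta\bigl[\alpha h(0) + \beta\bigr]^{n-1} + \alpha\int_0^\infty \bigl[\alpha h(s) + \beta\bigr]^{n-1}\,e^{-s}\,ds.$$

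\textbf{Closing inequality and main obstacle.} It remains to show this lower bound is at least $(\alpha M^{1/n} + \beta)^n$ for $M := \int_0^\infty h^{n-1}e^{-s}\,ds = \nu^n(A)$ and any non-increasing $h : \R_+ \to [0,1]$. This is the delicate step. The model case $h \equiv c$ (product ideals) reduces to $(\alpha c + \beta)^{(n-1)/n} \geq \alpha c^{(n-1)/n} + \beta$, which follows from the concavity of $x \mapsto x^{(n-1)/n}$ applied with weights summing to $1$ (equivalently, from the convexity of $y \mapsto \log(\alpha e^y + \beta)$, which yields $\prod(\alpha x_i + \beta) \geq (\alpha(\prod x_i)^{1/n} + \beta)^n$). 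For a general non-increasing $h$ one reduces to the constant profile by step-function approximation; the extra boundary term $\beta[\alpha h(0) + \beta]^{n-1}$ is precisely what compensates the drop of $h$ below $h(0)$. Getting this dimensional compensation to close in a sharp way is the technical heart of Bobkov's argument, while the preparatory moves (the $T_r$-formula, the fibre decomposition, and the inductive bootstrap) are mechanical once set up.
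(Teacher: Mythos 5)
The paper itself does not prove this statement---it is quoted verbatim from Bobkov's 1994 paper \cite{Bob1} as background, so there is no ``paper's own proof'' to compare against. Your review must therefore be judged on its own terms, and on those terms it is a valid set-up but not a proof.

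Your preparatory steps are correct and well observed: the identity $\nu^n(A+rB_\infty)=\mu_r^{\otimes n}(A)$ with $\mu_r=(1-e^{-r})\delta_0+e^{-r}\nu$ does hold for ideals (if $a\in A$ has $|x_i-a_i|\le r$ then $(x_i-r)_+\le a_i$, so downward-closedness gives $T_r(x)\in A$, and conversely $T_r(x)\in A$ yields $x\in T_r(x)+rB_\infty$); the sections $A^s$ are ideals with $s\mapsto\nu^{n-1}(A^s)$ non-increasing; the Fubini split into the $\delta_0$-part and the absolutely continuous part is exact; and the model computation $(\alpha c+\beta)^{(n-1)/n}\ge \alpha c^{(n-1)/n}+\beta$ by concavity for the constant-profile case is correct. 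But the crucial closing inequality
\[
\beta\bigl[\alpha h(0)+\beta\bigr]^{n-1}+\alpha\int_0^\infty\bigl[\alpha h(s)+\beta\bigr]^{n-1}e^{-s}\,ds\;\ge\;\Bigl(\alpha M^{1/n}+\beta\Bigr)^n,\qquad M=\int_0^\infty h^{n-1}e^{-s}\,ds,
\]
for an arbitrary non-increasing $h:\R_+\to[0,1]$, is stated but not proved. The remark that the boundary term ``compensates the drop of $h$'' and the invocation of step-function approximation are heuristics, not an argument; you acknowledge this yourself by calling it ``the technical heart of Bobkov's argument.'' Passing from the constant profile to a general non-increasing $h$ is precisely where the difficulty lies (a bare Jensen/rearrangement move does not give the needed exponent $n/(n-1)$ uniformly in $h$, and the $\delta_0$-mass must be played off against the full integral in a delicate way), so as it stands this is a scaffolding with the load-bearing step missing. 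To be complete you would need to actually establish the displayed functional inequality---for instance by a genuine extremality/variational argument over non-increasing step functions, or by reproducing Bobkov's original analysis.
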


\begin{rem}
\begin{enumerate}
\item Recall that $A$ is an ideal of $\R_+^n$ if it satisfies the following condition {\center {if $x=(x_1,\ldots,x_n)\in A$, $y=(y_1,\ldots, y_n)\in \R_+^n$, $y_i\leq x_i$ for $i=1,\ldots,n$, then $y\in A$}}.\\
\item If $n\to\infty$ and $\nu^n(A)=p$ is constant (with respect to $n$), the right hand side of the preceding inequality decreases and converges toward a double exponential. That is to say 

$$
\nu^n(A+rB_\infty)\geq \exp(-e^{-r}\log(1/p)).
$$
\end{enumerate}
\end{rem}

As presented in \cite{Bob1}, it possible to achieve the following deviations inequalities for a measure $\mu^n$ by transporting the Exponential measure $\nu^n$.

\begin{thm}(Bobkov)
Let $X_1,\ldots, X_n$ be i.i.d. random variables with $\mathcal{L}(X_1)=\mu\in \mathcal{F}_0$ and set $M_n=\max_{i=1,\ldots,n}X_i$. Then, for every $p$, $0<p<1$, every $t\geq 0$, 

\begin{equation}\label{RT}
\p(M_n-m_p\geq t)\geq C\log (1/p)\exp(-ct),
\end{equation}

\begin{equation}\label{LT}
\p(M_n-m_p<- t)\leq C\exp\big(-e^{tc}\log(1/p)\big),
\end{equation}

\noindent with $m_p$ stands for the quantile of order $p$ of $M_n$ and $C, c>0$ are numerical constants.
\end{thm}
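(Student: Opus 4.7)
The plan is to derive both deviation inequalities by monotone transport from the standard Exponential measure $\nu$ on $\R_+$ to $\mu$, using the \emph{explicit} Exponential-max distribution for \eqref{RT} and \emph{Bobkov's isoperimetric inequality} for \eqref{LT}. Let $t$ be the monotone rearrangement $\nu \to \mu$ and set $T(y) = (t(y_1),\dots,t(y_n))$. If $Y_i$ are i.i.d.\ Exponential and $X_i = t(Y_i)$, monotonicity of $t$ yields $M_n = t(\max_i Y_i)$ in distribution, so
\begin{equation*}
\p(M_n \leq s) = \bigl(1 - e^{-t^{-1}(s)}\bigr)^n, \qquad s \in \R.
\end{equation*}
Setting $s_0 = t^{-1}(m_p)$, the defining relation $p = (1 - e^{-s_0})^n$ gives $n e^{-s_0} = n(1 - p^{1/n})$, which is comparable to $\log(1/p)$ in the relevant range of $(n,p)$.

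For \eqref{RT}, write $s = t^{-1}(m_p + t)$ and estimate
\begin{equation*}
\p(M_n - m_p \geq t) = 1 - (1 - e^{-s})^n \;\geq\; \tfrac{1}{2}\min\bigl(1,\, n e^{-s}\bigr).
\end{equation*}
The class $\mathcal{F}_0$ is designed so that the transport map satisfies the one-sided Lipschitz bound $t^{-1}(m_p + t) \leq s_0 + ct$; combining this with $n e^{-s_0} \gtrsim \log(1/p)$ then immediately produces $\p(M_n - m_p \geq t) \geq C\log(1/p)\, e^{-ct}$, which is \eqref{RT}.

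For \eqref{LT}, consider the ideals $A_p = \{y \in \R_+^n : \max_i y_i \leq s_0\}$ (with $\nu^n(A_p) = p$) and $A_{(t)} = \{y : \max_i y_i \leq t^{-1}(m_p - t)\}$ (with $\nu^n(A_{(t)}) = \p(M_n \leq m_p - t)$). A direct inspection yields $\{y : \max_i y_i \leq a\} + rB_\infty = \{y \in \R_+^n : \max_i y_i \leq a + r\}$, so using the complementary control $t^{-1}(m_p) - t^{-1}(m_p - t) \geq ct$ and choosing $r = ct$ gives $A_{(t)} + rB_\infty \subset A_p$. Bobkov's isoperimetric inequality then yields
\begin{equation*}
p \;\geq\; \bigl[e^{-r}\nu^n(A_{(t)})^{1/n} + 1 - e^{-r}\bigr]^n,
\end{equation*}
and solving for $\nu^n(A_{(t)})$ using $1 - x \leq e^{-x}$ and $n(1 - p^{1/n}) \gtrsim \log(1/p)$ produces $\nu^n(A_{(t)}) \leq C\exp\bigl(-e^{ct}\log(1/p)\bigr)$, which is \eqref{LT}.

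The main obstacle is to identify the class $\mathcal{F}_0$ precisely and to verify the two one-sided Lipschitz estimates for $t^{-1}$ in a neighbourhood of the quantile $m_p$; together they are what guarantee that an $\ell^\infty$-shift of size $r$ on the Exponential side corresponds to a shift of size at most $r/c$ on the $\mu$-side, in the spirit of Lemma~\ref{Log}. Once these quantitative controls on the transport map are in hand, \eqref{RT} follows from the explicit distribution of $\max_i Y_i$ and \eqref{LT} from Bobkov's isoperimetric inequality essentially by plugging in.
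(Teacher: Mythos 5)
The paper does not prove this theorem: it is cited verbatim as a result of Bobkov from \cite{Bob1}, and the surrounding discussion only indicates the general strategy (transport the standard Exponential measure, then apply Bobkov's $B_\infty$-isoperimetric inequality). So there is no ``paper's own proof'' to compare against; I can only judge your sketch on its internal consistency and against what the paper itself asserts next.

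Your treatment of \eqref{LT} is structurally sound and matches the strategy the paper hints at. The regularity you need, $t^{-1}(m_p)-t^{-1}(m_p-t)\geq ct$, amounts to $(t^{-1})'\geq c$, equivalently $t'$ bounded above; for the absolute value of a standard Gaussian (which the paper places in $\mathcal{F}_0$) one has $(t^{-1})'(x)=\kappa_\mu(x)$, which is comparable to $1+x$ and hence bounded below away from zero, so the condition holds. This is plausibly what defines $\mathcal{F}_0$.

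For \eqref{RT}, however, there is a genuine gap and it is not just a matter of ``filling in the Lipschitz estimate.'' The estimate you invoke, $t^{-1}(m_p+t)\leq s_0+ct$, requires $(t^{-1})'\leq c$; for the absolute value of a standard Gaussian $(t^{-1})'(x)$ grows like $1+x$ and is unbounded, so this fails on the very class $\mathcal{F}_0$ you need. The reason is that the direction of \eqref{RT} is a typo in the paper: as written it is a lower bound on the right tail, yet for the absolute value of a Gaussian $\p(M_n>m_p+t)$ behaves like $\log(1/p)\,e^{-m_p t}$, which tends to $0$ as $n\to\infty$ at fixed $p,t$ since $m_p\to\infty$, so no lower bound $C\log(1/p)e^{-ct}$ with numerical constants can hold. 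Compare also the Gamma example stated immediately after, $\p(M_n-\log n\geq t)\leq Ce^{-ct}$, which is an upper bound. With the corrected $\leq$, the argument is the mirror of yours and uses the \emph{same} regularity $(t^{-1})'\geq c$ as \eqref{LT}: with $s=t^{-1}(m_p+t)\geq s_0+ct$,
\begin{equation*}
\p(M_n>m_p+t)=1-(1-e^{-s})^n\leq n e^{-s}\leq n e^{-s_0}e^{-ct}=n(1-p^{1/n})e^{-ct}\leq \log(1/p)\,e^{-ct},
\end{equation*}
the last step being the unconditional inequality $n(1-p^{1/n})\leq\log(1/p)$ — note this is the opposite of the $n e^{-s_0}\gtrsim\log(1/p)$ you claimed, a further sign the statement should be an upper bound. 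In summary: your transport-plus-isoperimetry approach is the right one and handles \eqref{LT} correctly, but for \eqref{RT} you are proving a statement that is false as written, using a regularity hypothesis that fails on $\mathcal{F}_0$.
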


\begin{rem}
In \cite{Bob1}, there is some workable conditions which describe the set of measure $\mathcal{F}_0$. For instance Gamma measure or absolute value of standard Gaussian measure belong to $\mathcal{F}_0$.
\end{rem}

In particular, if we choose $p$ such that $p^{1/n}=F^{-1}(1-1/n)$, $m_p$ corresponds to the renormalizing term used in Extreme theory. For instance, for the  the Gamma measure, Bobkov's Theorem yields

\begin{prop}
Let $X_1,\ldots,X_n$ be i.i.d Gamma random variables. Set $M_n=\max_{i=1,\ldots,n}X_i$, then for every  $t\geq 0$ and every $n\geq 1$
 
$$
\p(M_n-\log n\geq t)\leq Ce^{-ct}
$$

and

$$
\p(M_n-\log n\leq -t)\leq Ce^{-e^{ct}}
$$
with $C,c>0$ are numerical constants.
\end{prop}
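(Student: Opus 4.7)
The plan is to apply Bobkov's Theorem directly, choosing the parameter $p$ so that the quantile $m_p$ coincides with the classical Extreme-value centering $\log n$ for the standard Exponential/Gamma distribution. For $X_1\sim\text{Exp}(1)$ the cumulative distribution function is $F(x)=1-e^{-x}$ for $x\geq 0$, so $F^{-1}(u)=\log\bigl(1/(1-u)\bigr)$. Since $M_n$ has cumulative distribution function $F^n$, the quantile of order $p$ is $m_p=F^{-1}(p^{1/n})$.

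First I would pick $p=(1-1/n)^n$, so that $p^{1/n}=1-1/n$ and consequently $m_p=F^{-1}(1-1/n)=\log n$. With this choice, the quantity $\log(1/p)=-n\log(1-1/n)$ governs both bounds in Bobkov's Theorem. A short calculation (using $-\log(1-x)=x+x^2/2+\cdots$ for $0<x<1$) shows that for every $n\geq 2$ one has
$$
1 \;<\; \log(1/p) \;=\; -n\log(1-1/n) \;\leq\; \log 4,
$$
so $\log(1/p)$ is bounded above and below by absolute constants independent of $n$.

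Next I would substitute this into the two inequalities of Bobkov's Theorem. The right-tail bound \eqref{RT} becomes
$$
\p(M_n-\log n\geq t)\leq C\log(1/p)\,e^{-ct}\leq C'e^{-ct},
$$
absorbing the bounded factor $\log(1/p)\leq \log 4$ into the constant. The left-tail bound \eqref{LT} becomes
$$
\p(M_n-\log n\leq -t)\leq C\exp\bigl(-e^{ct}\log(1/p)\bigr)\leq C\exp\bigl(-e^{ct}\bigr),
$$
using $\log(1/p)\geq 1$. Both are exactly the announced asymptotics: an exponential right tail and a double exponential left tail, matching the Gumbel behaviour. The case $n=1$ (excluded from the choice $p=(1-1/n)^n$) can be handled separately by the trivial bounds $\p(M_1\geq t)=e^{-t}$ and $\p(M_1\leq -t)=0$ for $t\geq 0$, and is absorbed by adjusting the numerical constants.

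The only real obstacle is checking that the Gamma measure (here taken with shape $1$, i.e.\ the standard Exponential) lies in the class $\mathcal{F}_0$ of Bobkov's Theorem; this is explicitly noted in the remark preceding the statement, so the application is legitimate. The rest is bookkeeping: verifying the uniform boundedness of $\log(1/p)$, and ensuring the constants $C,c$ can be chosen independently of $n$.
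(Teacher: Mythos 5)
Your argument is essentially the one the paper intends, and the paper itself gives no more than the one-line invocation "Bobkov's Theorem yields," so you are supplying exactly the bookkeeping the paper leaves out: pick $p$ so that the quantile $m_p$ equals $\log n$, check $\log(1/p)$ is pinned between absolute constants, and plug in. Two points deserve explicit mention in a careful write-up.

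First, as printed, the paper's inequality \eqref{RT} reads $\p(M_n-m_p\geq t)\geq C\log(1/p)e^{-ct}$, with a $\geq$, i.e.\ a \emph{lower} bound. You silently use it as an upper bound. That flip is almost certainly the right reading — a lower bound cannot imply the target upper bound, and Bobkov's isoperimetric theorem for ideals and $B_\infty$-enlargements (quoted just above) produces upper bounds on $\p(M_n > m_p + r)$ by taking complements in $\nu^n(A+rB_\infty)\geq\cdots$ with $A=\{M_n\leq m_p\}$ — but you should flag that \eqref{RT} appears to carry a sign typo rather than change it without comment. Likewise the paper's "choose $p$ such that $p^{1/n}=F^{-1}(1-1/n)$" is dimensionally garbled; your reading $m_p=F^{-1}(p^{1/n})$ with $p^{1/n}=1-1/n$ is the sensible one and worth stating as a correction.

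Second, you specialize "Gamma" to shape parameter $1$, i.e.\ the standard Exponential, so that $F^{-1}(1-1/n)=\log n$ exactly. This restriction is actually necessary, not merely convenient: for a Gamma law of shape $k\neq 1$ the correct centering is $\log n + (k-1)\log\log n - \log\Gamma(k) + o(1)$, and if one forces $m_p=\log n$ then $\log(1/p)\asymp(\log n)^{k-1}$, which for $k>1$ is unbounded in $n$ and cannot be absorbed into the constant $C$ in \eqref{RT}. So the Proposition as stated, with centering $\log n$ and constants independent of $n$, holds only in the exponential (shape $1$) case; making that explicit, as you do, closes a gap left open by the paper's imprecise statement. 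With these two caveats recorded, the numerical checks ($1<-n\log(1-1/n)\leq\log 4$ for $n\geq 2$, and the trivial $n=1$ case) are correct and the proof is complete.
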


These non-asymptotic deviations inequalities express the correct tail behaviour of the maximum of Gamma random variables  (which belongs to the Gumbel's domain of attraction). Furthermore, such inequalities imply that $\p(|M_n-\log n|\geq t)\leq Ce^{-ct}$, which can be integrated to recover the fact (that can be easily obtained from Poincar\'e's inequality) that ${\rm Var}(M_n)\leq C$.\\
\newline
All of this should be obtained for the maximum of absolutes values of independent and identically distributed standard Gaussian random variables. The details are left to the reader. Recall that such kind of inequality as already been obtained by Schetchtman in \cite{Sch}.

\subsection{Comparison with existing literature}
In this section we compare our main results with recent articles which produce Superconcentration for i.i.d. random variables by other means.\\

\subsubsection{Renyi's representation and order statistics}
The authors of \cite{BT} combined three different arguments to bound the variance (or to obtain deviation inequalities)  of order statistics from a sample of i.i.d. random variables. More precisely, let $X_1,\ldots X_n$ be real i.i.d. random variables. Denote the associated order statistics by 

$$
X_{(1)}>\ldots> X_{(n)}.
$$

\noindent In their article \cite{BT}, the authors obtained the following result 

$$
{\rm Var}(X_{(k)})\leq \frac{2}{k}\E\bigg[\frac{1}{\kappa_\mu(X_{(k+1)})^2}\bigg], \quad k=1,\ldots,n.
$$

\noindent Their scheme of proof is based on Renyi's representation formula (cf. \cite{DeHaan}), which allow one to express order statistics in terms of renormalized sums of i.i.d Exponential random variables. They combined this representation with Efron-Stein's inequality (cf. \cite{BLM}) and Harris's negative association (to do so they must assume that the function $\kappa_\mu$ is non-increasing) in order to bound from above the variance of $X_{(k)},\,k=1,\ldots,n$.\\
\newline
They also obtained right deviation inequalities (around the mean) in a Gaussian setting. That is to say, if $X_i=|Y_i|$  with $\mathcal{L}(Y_i)=\mathcal{N}(0,1)$ for every $i=1,\ldots n$ and $U(s)=\Phi^{-1}(1-1/(2s))$, with $\Phi$ the distribution function of a standard Gaussian random variables, they obtained

$$
\p\bigg(X_{(1)}-\E[X_{(1)}]\leq t/(3U(n)+\sqrt{t}/U(n)+\delta_n\bigg)\leq e^{-t},\,t\geq 0
$$

\noindent with $\delta_n>0$ and $[U(n)]^3\delta_n\to\frac{\pi^2}{12}$ as $n\to \infty$.\\
\newline
The major drawback of this approach is that it can only be performed on order statistics. Our method seems to be more fexible and allows one to recover (from the measure $\nu$) Poincar\'e's inequality (for the measure of interest $\mu$) when the transport map is Lipschitz. It is also clear that the hypothesis (non-increasing) on the function $\kappa_\mu$ is not necessary to obtain upper bound on the variance. We have shown that this argument can only be used to reach exponential deviation inequalities. On this matter, Berstein's type of deviation inequality from \cite{BT} is more precise than ours, but it does not give back a relevant bound on the variance after integration. It is also surprising that the authors \cite{BT} did not deal with the more classical standard Gaussian case (without absolute value).

\subsubsection{Hypercontractive approach and semigroup interpolations}

The comparison with the hypercontractive approach is straightforward.  On one hand the hypercontractive approach can be used to deal with correlated Gaussians vectors (cf. \cite{Chatt1, KT, KT2}). On the other hand, the hypercontractive method can not reach any decay faster than $1/\log n$ and  can only provide an exponential decay at the level of concentration inequalities. For instance, it does not seem possible to show, with hypercontractive arguments, that neither the variance of the Median of a standard Gaussian sample is of order  $1/n$ nor to obtain the right order of the fluctuations of log-concave measure with potential  $V(x)=|x|^\alpha$ when $\alpha>2$ (notice also that hypercontractivity is not satisfied when $0<\alpha<1$).

\subsubsection{Comparison with Talagrand's inequality}
This section's purpose is to compare Proposition \ref{prop.poincare.poids.gaussien} with the following result. 

\begin{prop}[Talagrand]
Let  $f\,:\,\R^n\to \R$ be smooth enough, then it holds 

\begin{equation}\label{Tal2}
{\rm Var}_{\gamma_n}(f)\leq C\sum_{i=1}^n\frac{\|\partial_i f\|_2^2}{1+\log\bigg( \frac{\|\partial_if\|_2}{\|\partial_if\|_1}\bigg)},
\end{equation}
\end{prop}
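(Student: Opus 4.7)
The plan is to prove Talagrand's $L^1$--$L^2$ inequality through hypercontractivity of the Ornstein--Uhlenbeck semigroup $(P_t)_{t\geq 0}$, which is the classical route. The starting point is the semigroup representation of the variance. Using the commutation $\partial_i P_t = e^{-t}P_t \partial_i$ together with Gaussian integration by parts,
$$
{\rm Var}_{\gamma_n}(f) = 2 \int_0^\infty \int_{\R^n} |\nabla P_t f|^2 \, d\gamma_n \, dt = 2 \sum_{i=1}^n \int_0^\infty e^{-2t}\, \|P_t \partial_i f\|_2^2 \, dt,
$$
which is obtained by differentiating $t \mapsto \int (P_t f - \E f)^2 d\gamma_n$ and rewriting $\int P_t f \cdot L P_t f\, d\gamma_n = -\int |\nabla P_t f|^2 d\gamma_n$.

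Next I would apply Nelson's hypercontractivity: for $q(t) = 1 + e^{-2t}$ one has $\|P_t g\|_2 \leq \|g\|_{q(t)}$, and combining this with log-convexity of the $L^p$ norms between $L^1$ and $L^2$ gives
$$
\|g\|_{q(t)} \leq \|g\|_1^{\theta(t)}\|g\|_2^{1-\theta(t)}, \qquad \theta(t) = \frac{1-e^{-2t}}{1+e^{-2t}},
$$
applied to $g = \partial_i f$. Substituting back and changing variables via $u = e^{-2t}$, and writing $b_i = \|\partial_i f\|_2$ and $r_i = \|\partial_i f\|_1/\|\partial_i f\|_2 \in (0,1]$, the exponents of $b_i$ add up to $2$ and what remains is
$$
{\rm Var}_{\gamma_n}(f) \leq \sum_{i=1}^n b_i^2 \int_0^1 r_i^{2(1-u)/(1+u)}\, du.
$$

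The last step is to control this one-dimensional integral. The substitution $v = (1-u)/(1+u)$ transforms it into $\int_0^1 \frac{2 \, e^{-2v \log(1/r_i)}}{(1+v)^2}\, dv$, which is bounded by $\min\bigl(1, 1/\log(1/r_i)\bigr)$ up to a universal constant, hence by $C/\bigl(1 + \log(\|\partial_i f\|_2/\|\partial_i f\|_1)\bigr)$. Summing over $i$ yields the desired inequality.

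The heart of the argument, and the only genuine difficulty, is Nelson's hypercontractivity theorem itself. Once this is granted, the remaining steps are essentially bookkeeping: the logarithmic denominator in the target arises from the small-$t$ degeneracy of $\theta(t)$, and integrating this mild degeneracy against the $e^{-2t}$ weight produces exactly a $1/\log$ factor. It is worth noting that no transport argument as in Theorem~\ref{thm.tanguy.transport1} seems to yield such an inequality, which is the whole point of the comparison being drawn in this section.
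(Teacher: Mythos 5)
The paper does not actually prove this proposition; it is quoted as a known result, with the remark attributing it to Talagrand's paper \cite{TalL1L2}. There is therefore no in-text proof to compare against.

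That said, your argument is correct and is the standard hypercontractive proof of the Gaussian $L^1$--$L^2$ inequality. The semigroup decomposition ${\rm Var}_{\gamma_n}(f)=2\int_0^\infty\int|\nabla P_t f|^2\,d\gamma_n\,dt$ is right, the commutation $\partial_i P_t=e^{-t}P_t\partial_i$ is the Ornstein--Uhlenbeck identity, and Nelson's hypercontractivity gives $\|P_t g\|_2\leq\|g\|_{1+e^{-2t}}$; your interpolation exponent $\theta(t)=\frac{1-e^{-2t}}{1+e^{-2t}}$ is exactly what the H\"older/Riesz--Thorin relation $1/q(t)=\theta+\frac{1-\theta}{2}$ produces. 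Writing $r_i=\|\partial_i f\|_1/\|\partial_i f\|_2\le 1$ (valid by Cauchy--Schwarz on the probability space $(\R^n,\gamma_n)$) and carrying out the two substitutions, the remaining integral is bounded both by $1$ and by $1/\log(1/r_i)$, hence by $2/(1+\log(1/r_i))$, which yields the statement with $C=4$ once the factor $2$ from the variance identity is absorbed. The only caveats worth making explicit in a full write-up are the density/approximation step justifying the semigroup representation for general smooth $f$ and the degenerate case $\|\partial_i f\|_2=0$; both are routine. Your closing observation that the transport method of Theorem~\ref{thm.tanguy.transport1} does not recover \eqref{Tal2} is in the spirit of the paper's discussion, which presents the two inequalities as genuinely incomparable.
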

\begin{rem}
This inequality was originally proved in \cite{TalL1L2} and have been a major tool in Superconcentration theory (cf. \cite{Chatt1, KT, KT2}).
\end{rem}

 To this task, it is enough to deal with the dimension one case.  Such inequalities are not comparable as it can be seen on the following functions $f_M$ and $f_{\epsilon}$. Indeed, let $M>0$ be and define the function $f_M$ by
 
 $$
 f_M(x)=\bigg(\int_0^xe^{t^2/4}1_{[-M,M]}(t)dt\bigg)/\|f'_M\|_1,\quad x\in\R.
 $$
 
 \noindent and, for every $0<\epsilon<1$, consider the function $f$, defined by 

 $$
f_\epsilon(x)=
\left\{
\begin{array}{ll}
\frac{|x|}{\epsilon}+1,\quad |x|\leq \epsilon\\
0,\quad |x|>\epsilon,\\
\end{array}
\right.
$$

\noindent Then, it is enough to choose $\epsilon=1/2n, n\geq1$. \\

\textit{Aknowledgment. This work has been done during my Ph.D and I would like to thank my Ph.D advisor M. Ledoux for fruitful discussions. Also, I would like to thank N. Gozlan for several comments and precious remarks. }

\end{document}